\newtheorem{cro}{Corollary}[section]
\newtheorem{defn}{Definition}[section]
\newtheorem{thm}{Theorem}[section]
\newtheorem{lem}{Lemma}[section]
\begin{document}

\title{Conditional Variational Principle for Historic Set in Some
Nonuniformly Hyperbolic Systems
 \footnotetext {* Corresponding author}
  \footnotetext {2010 Mathematics Subject Classification: 37D25, 37D35, 37C40}}
\author{Zheng Yin$^{1}$, Ercai Chen$^{*1,2}$ \\
  \small   1 School of Mathematical Sciences and Institute of Mathematics, Nanjing Normal University,\\
   \small   Nanjing 210023, Jiangsu, P.R.China\\
    \small 2 Center of Nonlinear Science, Nanjing University,\\
     \small   Nanjing 210093, Jiangsu, P.R.China.\\
      \small    e-mail: zhengyinmail@126.com ecchen@njnu.edu.cn
}
\date{}
\maketitle

\begin{center}
 \begin{minipage}{120mm}
{\small {\bf Abstract.} This article is devoted to the study
of the historic set, which was introduced by Ruelle,
of Birkhoff averges in some nonuniformly
hyperbolic systems via Pesin theory.
Particularly, we give a conditional variational principle
for historic sets. Our results can be applied
(i) to the diffeomorphisms on surfaces,
(ii) to the nonuniformly hyperbolic diffeomorphisms described by Katok
and several other classes of diffeomorphisms derived from Anosov systems. }
\end{minipage}
 \end{center}

\vskip0.5cm {\small{\bf Keywords and phrases:} Historic set, Pesin set, topological entropy.}\vskip0.5cm
\section{Introduction}
$(M,d,f)$ (or $(M,f)$ for short) is a topological dynamical system means that $(M,d)$ is a compact metric space together with a continuous self-map $f:M\to M.$
For a continuous function $\varphi:M\to\mathbb R$, $M$ can be divided into the following two parts:
\begin{align*}
M=\bigcup\limits_{\alpha\in\mathbb R}M(\varphi,\alpha)\cup\widehat{M}(\varphi,f),
\end{align*}
where
$$M(\varphi,\alpha)=\bigg\{x\in M:\lim\limits_{n\to\infty}\frac{1}{n}\sum\limits_{i=0}^{n-1}\varphi(f^ix)=\alpha\bigg\}$$
and
$$\widehat{M}(\varphi,f)=\bigg\{x\in M:\lim\limits_{n\to\infty}\frac{1}{n}\sum\limits_{i=0}^{n-1}\varphi(f^ix)\text{ does not exist}\bigg\}.$$
The level set $M(\varphi,\alpha)$ is so-called multifractal decomposition sets of  ergodic averages of $\varphi$ in multifractal analysis. There are fruitful results about the description of the structure ( Hausdorff dimension or topological entropy or  topological pressure) of these level sets. See e.g. \cite{BarSau,BarSauSch,Cli,FenHua,PeiChe,PfiSul,TakVer,Tho,ZhoChe}
and the references therein.

The set $\widehat{M}(\varphi,f)$ is called the historic set of  ergodic average of $\varphi.$ This terminology was introduced by Ruelle in \cite{Rue}. It is also called  non-typical points (see \cite{BarSch}), irregular set (see \cite{Tho1,Tho2}) and divergence  points (see \cite{CheKupShu}). By Birkhoff's ergodic theorem, $\widehat{M}(\varphi,f)$ is not detectable from the point of view of an invariant measure, i.e., for any invariant measure $\mu,$
\begin{align*}
\mu(\widehat{M}(\varphi,f))=0.
\end{align*}
However, Chen, Kupper and Shu \cite{CheKupShu} proved that $\widehat{X}(\varphi,f)$
is either empty or carries full entropy for maps with the specification property.  Thompson \cite{Tho1}
extended it to topological pressure for maps with the specification property.
In \cite{Tho2}, Thompson obtained the same result for maps with $g$-almost
product property, which can be applied to every $\beta$-shift.
This implies that $\widehat{M}(\varphi,f)$ is ``thick" in view of
topological entropy and topological pressure. Recently, Bomfim and Varandas \cite{BomVar} studied the upper bound estimates for topological
pressure of historic sets for weak Gibbs measures.
Motivated by their work, the aim of this paper is to study the topological entropy
of historic set in some nonuniformly hyperbolic systems via Pesin theory. Particularly, a conditional variational
principle is obtained for historic sets.

This article is organized as follows. In section 2, we provide some notions
and results of Pesin theory and state the main result. Section 3 is devoted
to the proof of the main results. Examples and applications are given in section 4.


\section{ Preliminaries}
In this section, we first present some notations to be used in this paper.
Then we introduce some notions and results of
Pesin theory \cite{BarPes2,KatHas,Pol} and state the main results.

We denote by $\mathscr M_{\rm inv}(M,f)$ and $\mathscr M_{\rm erg}(M,f)$
the set of all $f$-invariant Borel probability measures and ergodic measures
respectively. For an $f$-invariant subset $Z\subset X,$ let
$\mathscr M_{\rm inv}(Z,f)$ denote the subset of $\mathscr M_{\rm inv}(M,f)$
for which the measures $\mu$ satisfy $\mu(Z)=1$ and $\mathscr M_{\rm erg}(Z,f)$
denote those which are ergodic. Denote by $C^0(M)$ the space of continuous
functions from $M$ to $\mathbb{R}$ with the sup norm. For $\varphi\in C^0(M)$
and $n\geq1$ we denote $\sum_{i=0}^{n-1}\varphi(f^ix)$ by $S_n\varphi(x)$.
For every $\epsilon>0$, $n\in \mathbb{N}$ and a point $x\in M$, define
$B_n(x,\epsilon)=\{y\in M:d(f^ix,f^iy)<\epsilon,\forall 0\leq i\leq n-1\}$.
The $n$-ordered empirical measure of $x$ is
given by
\begin{align*}
\mathscr{E}_n(x)=\frac{1}{n}\sum\limits_{i=0}^{n-1}\delta_{f^ix},
\end{align*}
where $\delta_y$ is the Dirac mass at $y$. Denote by $V(x)$ the set
of limit measures of the sequence of measures $\mathscr{E}_n(x).$

Suppose $M$ is a compact connected boundary-less Riemannian
$n$-dimension manifold and $f:X\to X$ is a $C^{1+\alpha}$
diffeomorphism. Let $\mu\in\mathscr M_{\rm erg}(Z,f)$ and $Df_x$ denote
the tangent map of $f$ at $x\in M.$ We say that
$x\in X$ is a regular point of $f$ if there exist
$\lambda_1(\mu)>\lambda_2(\mu)>\cdots>\lambda_{\phi(\mu)}(\mu)$ and a
decomposition on the tangent space $T_x
M=E_1(x)\oplus\cdots\oplus E_{\phi(\mu)}(x)$ such that
\begin{align*}
\lim\limits_{n\to\infty}\frac{1}{n}\log\|(Df^n_x)u\|=\lambda_j(x),
\end{align*}
where $0\neq u\in E_j(x), 1\leq j\leq \phi(\mu).$ The number $\lambda_j(x)$ and
the space $E_j(x)$ are called the Lyapunov exponents and the eigenspaces of
$f$ at the regular point $x,$ respectively. Oseledets theorem \cite{Ose} say that all
regular points forms a Borel set with total measure. For a regular point
$x\in M$, we define
\begin{align*}
\lambda^+(\mu)=\min\{\lambda_i(\mu)|\lambda_i(\mu)\geq0,1\leq i\leq \phi(\mu)\}
\end{align*}
and
\begin{align*}
\lambda^-(\mu)=\min\{-\lambda_i(\mu)|\lambda_i(\mu)\leq0,1\leq i\leq \phi(\mu)\}.
\end{align*}
We appoint $\min\emptyset=0$. An ergodic measure $\mu$ is hyperbolic if $\lambda^+(\mu)$
and $\lambda^-(\mu)$ are both non-zero.

\begin{defn}
Given $\beta_1,\beta_2\gg\epsilon>0$ and for all $k\in\mathbb{Z}^+,$
the hyperbolic block $\Lambda_k=\Lambda_k(\beta_1,\beta_2,\epsilon)$
consists of all points $x\in M$  such that there exists a
decomposition $T_xM=E_x^s\oplus E_x^u$ satisfying:
\begin{itemize}
  \item $Df^t(E_x^s)=E^s_{f^tx}$ and $Df^t(E_x^u)=E^u_{f^tx};$
  \item $\|Df^n|E^s_{f^tx}\|\leq e^{\epsilon k}e^{-(\beta_1-\epsilon)n}e^{\epsilon|t|},\forall t\in\mathbb{Z},n\geq1;$
  \item $\|Df^{-n}|E^u_{f^tx}\|\leq e^{\epsilon k}e^{-(\beta_2-\epsilon)n}e^{\epsilon|t|},\forall t\in\mathbb{Z},n\geq1;$
  \item $\tan (\angle(E^s_{f^tx},E^u_{f^tx}))\geq e^{-\epsilon k}e^{-\epsilon|t|},\forall t\in\mathbb{Z}.$
\end{itemize}
\end{defn}

\begin{defn}
$
\Lambda(\beta_1,\beta_2,\epsilon)=\bigcup\limits_{k=1}^\infty\Lambda_k(\beta_1,\beta_2,\epsilon)
$ is a Pesin set.
\end{defn}
The following statements are elementary properties of Pesin blocks (see \cite{Pol}):
\begin{itemize}
  \item[(1)] $\Lambda_1\subseteq\Lambda_2\subseteq\cdots;$
  \item[(2)] $f(\Lambda_k)\subseteq\Lambda_{k+1},f^{-1}(\Lambda_k)\subseteq\Lambda_{k+1};$
  \item[(3)] $\Lambda_k$ is compact for each $k\geq1$;
  \item[(4)] For each $k\geq1$, the splitting $\Lambda_k\ni x\mapsto E_x^s\oplus E_x^u$ is continuous.
\end{itemize}
The Pesin set $\Lambda(\beta_1,\beta_2,\epsilon)$ is an
$f$-invariant set but usually not compact. Given an ergodic measure
$\mu\in\mathscr M_{\rm erg}(M,f)$, denote by $\mu|\Lambda_l$ the
conditional measure of $\mu$ on $\Lambda_l.$ Let
$\widetilde{\Lambda}_l=$ supp$(\mu|\Lambda_l)$ and
$\widetilde{\Lambda}_\mu=\bigcup_{l\geq1}\widetilde{\Lambda}_l.$
If $\omega$ is an ergodic hyperbolic measure for $f$ and $\beta_1\leq\lambda^-(\omega)$ and
$\beta_2\leq\lambda^+(\omega)$, then $\omega\in \mathscr M_{\rm inv}(\widetilde{\Lambda}_\omega,f)$.

Let $\{\delta_k\}_{k=1}^\infty $ be a sequence of positive real
numbers. Let $\{x_n\}_{n=-\infty}^\infty$ be a sequence of points in
$\Lambda=\Lambda(\beta_1,\beta_2,\epsilon)$ for which there exists a
sequence $\{s_n\}_{n=-\infty}^{\infty}$ of positive integers satisfying:
\begin{equation*}\begin{split}
&\text{(a) } x_n\in\Lambda_{s_n},\forall n\in\mathbb{Z};\\
&\text{(b) } |s_n-s_{n-1}|\leq 1, \forall n\in\mathbb{Z};\\
&\text{(c) } d(f(x_n),x_{n+1})\leq\delta_{s_n},  \forall n\in\mathbb{Z},
\end{split}\end{equation*}
then we call $\{x_n\}_{n=-\infty}^\infty$ a $\{\delta_k\}_{k=1}^\infty$ pseudo-orbit.  Given $\eta>0$
a point $x\in M$ is an $\eta$-shadowing point for the
$\{\delta_k\}_{k=1}^\infty$ pseudo-orbit  if $d(f^n(x),x_n)\leq
\eta\epsilon_{s_n},\forall n\in\mathbb{Z},$ where
$\epsilon_k=\epsilon_0e^{-\epsilon k}$ and $\epsilon_0$ is a constant only dependent on
the system of $f$.

\vskip0.3cm

\noindent \textbf{Weak shadowing lemma.} \cite{Hir,KatHas,Pol}
{\it
Let $f:M\to M$ be a $C^{1+\alpha}$ diffeomorphism, with a non-empty
Pesin set $\Lambda=\Lambda(\beta_1,\beta_2,\epsilon)$ and fixed
parameters, $\beta_1,\beta_2\gg \epsilon>0.$ For $\eta>0$ there exists
a sequence $\{\delta_k\}$ such that for any $\{\delta_k\}$
pseudo-orbit there exists a unique $\eta$-shadowing point.
}
\vskip0.3cm

\textbf{Bowen's topological entropy}
Bowen introduced his concept of topological entropy in \cite{Bow}.
This study defines it in an alternative way for convenience
\cite{Pes}. Given
$Z\subset M,\epsilon>0$ and $N\in \mathbb{N},$ let
$\Gamma_n(Z,\epsilon)$ be the collection of all finite or countable
covers of $Z$ by sets of the form $B_n(x,\epsilon)$ with $n\geq N$.
For each $s\in\mathbb{R},$ we set
\begin{align*}
m(Z,s,N,\epsilon)=\inf\bigg\{\sum\limits_{B_n(x,\epsilon)\in\mathcal{C}}e^{-ns}:\mathcal{C}\in \Gamma_n(Z,\epsilon)\bigg\},
\end{align*}
and
\begin{align*}
m(Z,s,\epsilon)=\lim_{N\to\infty}m(Z,s,N,\epsilon).
\end{align*}
Define
\begin{align*}
h_{top}(Z,\epsilon)&=\inf\{s\in\mathbb{R}:m(Z,s,\epsilon)=0\}=\sup\{s\in\mathbb{R}:m(Z,s,\epsilon)=\infty\},
\end{align*}
and topological entropy of $Z$ is
\begin{align*}
h_{top}(Z):=\lim_{\epsilon\rightarrow0}h_{top}(Z,\epsilon).
\end{align*}
Now, we state the main result of this paper as follows:
\begin{thm}\label{thm2.1}
Let $f:M\to M$ be a $C^{1+\alpha}$ diffeomorphism of a compact Riemannian manifold, with a non-empty
Pesin set $\Lambda=\Lambda(\beta_1,\beta_2,\epsilon)$ and fixed
parameters, $\beta_1,\beta_2\gg \epsilon>0$ and
let $\mu\in\mathscr M_{\rm erg}(M,f)$ be
any ergodic measure. Let
\begin{align}\label{equ2.1}
N(\widetilde{\Lambda}_\mu)=\{x\in M:V(x)\cap \mathscr{M}_{inv}(\widetilde{\Lambda}_\mu,f)\neq\emptyset\}.
\end{align}
For $\varphi\in C^0(M)$, one of the following conclusions is right.
\begin{enumerate}
\item The function $\nu\mapsto\int\varphi d\nu$ is constant for $\nu\in \mathscr M_{\rm inv}(\widetilde{\Lambda}_\mu,f)$.
\item $\widehat{M}(\varphi|N(\widetilde{\Lambda}_\mu),f)\neq\emptyset$ and
\begin{align*}
h_{top}\left(\widehat{M}(\varphi|N(\widetilde{\Lambda}_\mu),f)\right)=\sup\left\{h_\nu(f):\nu\in \mathscr{M}_{inv}(\widetilde{\Lambda}_\mu,f)\right\},
\end{align*}
\end{enumerate}
where $\widehat{M}(\varphi|N(\widetilde{\Lambda}_\mu),f)=\widehat{M}(\varphi,f)\cap N(\widetilde{\Lambda}_\mu)$.
\end{thm}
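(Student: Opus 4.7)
Suppose conclusion (1) fails, so that there exist $\nu_1,\nu_2\in\mathscr M_{\rm inv}(\widetilde\Lambda_\mu,f)$ with $\int\varphi\,d\nu_1\ne\int\varphi\,d\nu_2$. By the ergodic decomposition, applied inside the invariant set $\widetilde\Lambda_\mu$, I may take both measures to lie in $\mathscr M_{\rm erg}(\widetilde\Lambda_\mu,f)$. Denote $s^*:=\sup\{h_\nu(f):\nu\in\mathscr M_{\rm inv}(\widetilde\Lambda_\mu,f)\}$ and, for each $\gamma>0$, pick an ergodic $\omega\in\mathscr M_{\rm erg}(\widetilde\Lambda_\mu,f)$ with $h_\omega(f)>s^*-\gamma$. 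Choose $l_0$ so large that $\nu_1,\nu_2,\omega$ each assign positive mass to the compact Pesin block $\widetilde\Lambda_{l_0}$; then all subsequent constructions can be carried out in a neighbourhood of $\widetilde\Lambda_{l_0}$, where Katok's entropy formula and the weak shadowing lemma apply with uniform constants.

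The lower bound is the heart of the argument. The plan is to build a compact set $F\subset\widehat{M}(\varphi|N(\widetilde\Lambda_\mu),f)$ with $h_{\rm top}(F)\ge h_\omega(f)-2\gamma$. Katok's entropy formula applied to $\omega$ produces, for each large $n$, a set $\Gamma_n\subset\widetilde\Lambda_{l_0}$ of at least $e^{n(h_\omega-\gamma)}$ $(n,\xi)$-separated points whose empirical measures $\mathscr E_n(z)$ lie within a fixed weak-$*$ distance of $\omega$. Fixing Birkhoff-generic points $y_1,y_2\in\widetilde\Lambda_{l_0}$ for $\nu_1,\nu_2$ and a rapidly growing sequence $\{n_k\}$, I concatenate a $\{\delta_k\}$-pseudo-orbit with Pesin indices $s_n\le l_0+1$ by alternating: a $\Gamma_{n_{2k}}$-segment (supplying entropy), a long segment starting at $y_1$ that pushes the empirical measure within $1/k$ of $\nu_1$, a $\Gamma_{n_{2k+1}}$-segment, a long $y_2$-segment correcting toward $\nu_2$, and so on. The weak shadowing lemma yields a unique $\eta$-shadowing point $x$ for each choice of starting points from the $\Gamma_{n_k}$'s. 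By construction $\frac{1}{n}S_n\varphi(x)$ oscillates between $\int\varphi\,d\nu_1$ and $\int\varphi\,d\nu_2$ at the prescribed times, placing $x$ in $\widehat M(\varphi,f)$; meanwhile the empirical measure is within $1/k$ of $\omega$ at the end of each entropy block, so $\omega\in V(x)\cap\mathscr M_{\rm inv}(\widetilde\Lambda_\mu,f)$ and $x\in N(\widetilde\Lambda_\mu)$. Choosing $\eta$ much smaller than $\xi$, distinct $\Gamma$-choices produce shadowing points that remain separated at the summed scales, and a standard Bowen-ball count gives $h_{\rm top}(F)\ge h_\omega-2\gamma$. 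Letting $\gamma\downarrow0$ yields both the nonemptiness of $\widehat{M}(\varphi|N(\widetilde\Lambda_\mu),f)$ and the lower bound $s^*$.

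For the upper bound, decompose $\widehat{M}(\varphi|N(\widetilde\Lambda_\mu),f)\subset\bigcup_{l,m}Z_{l,m}$, where $Z_{l,m}$ is the set of $x$ admitting a subsequence along which the empirical measure places mass $\ge 1-1/m$ on a $1/m$-neighbourhood of $\widetilde\Lambda_l$. For each $(l,m)$, a Katok-style Bowen-ball covering along these subsequences, exploiting the uniform hyperbolicity on the compact block $\widetilde\Lambda_l$, bounds $h_{\rm top}(Z_{l,m})$ by $\sup\{h_\nu(f):\nu\in\mathscr M_{\rm inv}(\widetilde\Lambda_l,f)\}+o_m(1)\le s^*+o_m(1)$; the countable stability of $h_{\rm top}$ then delivers the upper bound $s^*$. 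The main obstacle is the coordinated choice of block lengths $\{n_k\}$ in the lower-bound construction: the entropy from the $\Gamma$-blocks must dominate, the $y_j$-correcting blocks must be long enough to pin the empirical measure near $\nu_j$ yet short enough not to dilute the entropy gained, the pseudo-orbit discipline $|s_n-s_{n-1}|\le 1$ and $d(f(x_n),x_{n+1})\le\delta_{s_n}$ must hold throughout with the $\{\delta_k\}$ fixed in advance by the shadowing lemma, and the accumulated shadowing errors $\eta\epsilon_{s_n}$ must not destroy separation. Tuning these competing demands simultaneously is the central technical difficulty.
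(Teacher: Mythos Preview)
Your lower-bound strategy has a genuine gap that cannot be fixed by ``tuning''. You separate the r\^oles of the entropy-carrying measure $\omega$ and the oscillation-producing measures $\nu_1,\nu_2$, and then alternate $\Gamma$-blocks with long $y_j$-blocks. But the two requirements you list are mutually exclusive. For the $y_1$-block to push the \emph{whole} empirical measure within $1/k$ of $\nu_1$, its length must dominate everything preceding it; for the next $\Gamma$-block to bring the empirical measure back near $\omega$, \emph{its} length must again dominate. Hence each block dwarfs all earlier ones, and at a time $n$ equal to the end of a $y_j$-block the number of $(n,\epsilon'/2)$-separated shadowing points is at most $\prod_i\#\Gamma_{n_i}\le\exp\bigl((h_\omega+o(1))\sum_i n_i\bigr)$ with $\sum_i n_i=o(n)$. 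Bowen's definition allows the cover to choose such $n$, so $m(F,s,\epsilon')\le\exp\bigl(n(o(1)-s)\bigr)\to 0$ for every $s>0$, giving $h_{\rm top}(F)=0$, not $h_\omega-2\gamma$. The entropy distribution principle requires the bound for \emph{all} large $n$, and your construction fails precisely along the $y_j$-block endpoints.

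The paper avoids this by refusing to separate the two r\^oles: it chooses $\mu_1$ with $h_{\mu_1}(f)>\mathbf{C}-\gamma/3$ and then sets $\mu_2=t_1\mu_1+t_2\mu'$ with $t_1$ close to $1$, so that $\int\varphi\,d\mu_1\ne\int\varphi\,d\mu_2$ \emph{and} $h_{\mu_2}(f)>\mathbf{C}-2\gamma/3$ by affinity of entropy. Both alternating measures now carry nearly full entropy, so every block in the pseudo-orbit contributes separated points at the correct exponential rate and the Bowen-ball count succeeds uniformly in $n$. This convex-combination trick is the missing idea in your argument. (Because $\mu_1,\mu_2$ are non-ergodic, the paper then approximates each by rational convex combinations $\nu_k=\sum a_{k,j}m_{k,j}$ of ergodic measures with controlled Katok entropy, which accounts for the elaborate indexing.)

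Two further points. First, your claim that the pseudo-orbit can be built with Pesin indices $s_n\le l_0+1$ is incorrect: for a segment of length $n$ starting in $\widetilde\Lambda_{l_0}$ the index necessarily climbs, and one must arrange that each segment \emph{returns} to the appropriate block at its end (the paper enforces $f^q(x)\in\xi_k(x)$ for some $q\in[n,(1+\gamma)n]$) and uses the ergodicity of $\mu$ to manufacture connecting orbits between blocks. Second, your upper-bound argument via the sets $Z_{l,m}$ is unnecessarily involved: since every $x\in N(\widetilde\Lambda_\mu)$ has some $\nu\in V(x)$ with $h_\nu(f)\le s^*$, Bowen's theorem $h_{\rm top}\bigl(\{x:\exists\,\nu\in V(x),\ h_\nu(f)\le t\}\bigr)\le t$ gives the bound in one line.
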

The condition (\ref{equ2.1}) is motivated by the work of Pesin \& Pitskel \cite{PesPit} and we do not require the measures are ergodic.
From theorem \ref{thm2.1}, we obtain that if the function $\nu\mapsto\int\varphi d\nu$ is not constant for 
$\nu\in \mathscr M_{\rm inv}(\widetilde{\Lambda}_\mu,f)$, we have $\widehat{M}(\varphi,f)\neq\emptyset$ and 
\begin{align*}
h_{top}(\widehat{M}(\varphi,f))\geq\sup\left\{h_\nu(f):\nu\in \mathscr{M}_{inv}(\widetilde{\Lambda}_\mu,f)\right\}.
\end{align*}

\begin{cro}\label{cro2.1}
Let $f:M\to M$ be a $C^{1+\alpha}$ diffeomorphism of a compact Riemannian manifold
and let $\omega\in\mathscr M_{\rm erg}(M,f)$ be a hyperbolic measure. For
$\beta_1\leq\lambda^-(\omega)$ and $\beta_2\leq\lambda^+(\omega)$,
let $\widetilde{\Lambda}_\omega=\bigcup_{l\geq1}{\rm supp}(\omega|\Lambda_l(\beta_1,\beta_2,\epsilon))$.
If $\varphi\in C^0(M)$, one of the following conclusions is right.
\begin{enumerate}
\item The function $\nu\mapsto\int\varphi d\nu$ is constant for $\nu\in \mathscr M_{\rm inv}(\widetilde{\Lambda}_\omega,f)$.
\item $\widehat{M}(\varphi|N(\widetilde{\Lambda}_\omega),f)\neq\emptyset$
and 
\begin{align*}
h_{top}(\widehat{M}(\varphi|N(\widetilde{\Lambda}_\omega),f))=\sup\left\{h_\nu(f):\nu\in \mathscr{M}_{inv}(\widetilde{\Lambda}_\omega,f)\right\}.
\end{align*}
\end{enumerate}
\end{cro}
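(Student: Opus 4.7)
\textbf{Proof plan for Corollary \ref{cro2.1}.} The plan is to derive the corollary as a direct specialization of Theorem \ref{thm2.1} with the arbitrary ergodic measure $\mu$ chosen to be the hyperbolic measure $\omega$. Since $\omega$ is ergodic and hyperbolic, Oseledets' theorem gives a full-$\omega$-measure set of regular points at which the nonzero Lyapunov exponents are bounded below in absolute value by $\lambda^-(\omega)$ and $\lambda^+(\omega)$. Standard Pesin theory then tells us that the union $\Lambda(\beta_1,\beta_2,\epsilon)=\bigcup_k \Lambda_k(\beta_1,\beta_2,\epsilon)$ has full $\omega$-measure whenever $\beta_1\le\lambda^-(\omega)$ and $\beta_2\le\lambda^+(\omega)$, so in particular the Pesin set is non-empty, which is the first hypothesis we need for Theorem \ref{thm2.1}.

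Next I would verify that the set $\widetilde{\Lambda}_\omega=\bigcup_{l\ge 1}\mathrm{supp}(\omega|\Lambda_l)$ supports $\omega$ as an invariant measure. This is exactly the statement already recorded in Section 2 immediately before the definition of pseudo-orbits: for a hyperbolic $\omega$ with the stated inequalities on $\beta_1,\beta_2$ we have $\omega\in\mathscr M_{\rm inv}(\widetilde{\Lambda}_\omega,f)$. In particular $\mathscr M_{\rm inv}(\widetilde{\Lambda}_\omega,f)$ is non-empty, so the supremum $\sup\{h_\nu(f):\nu\in\mathscr M_{\rm inv}(\widetilde{\Lambda}_\omega,f)\}$ is a well-defined element of $[0,\infty]$ and the corresponding set $N(\widetilde{\Lambda}_\omega)$, defined as in (\ref{equ2.1}), is a meaningful object.

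Having checked the hypotheses, I would invoke Theorem \ref{thm2.1} with $\mu:=\omega$. The conclusion of the theorem is exactly the dichotomy stated in the corollary: either $\nu\mapsto\int\varphi\,d\nu$ is constant on $\mathscr M_{\rm inv}(\widetilde{\Lambda}_\omega,f)$, or the historic set $\widehat M(\varphi|N(\widetilde{\Lambda}_\omega),f)$ is non-empty and carries topological entropy equal to $\sup\{h_\nu(f):\nu\in\mathscr M_{\rm inv}(\widetilde{\Lambda}_\omega,f)\}$. Since the theorem's statement coincides word-for-word with what the corollary claims once $\mu$ is replaced by $\omega$, no further argument is required.

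The only potential subtlety, and essentially the only place the hyperbolicity assumption enters, is confirming that the hypothesis $\Lambda(\beta_1,\beta_2,\epsilon)\ne\emptyset$ actually holds under the Lyapunov exponent conditions on $\omega$; this is a standard consequence of Oseledets and Pesin's construction of regular neighborhoods, and I do not anticipate any genuine difficulty. Hence the corollary is obtained as a one-line application of the main theorem.
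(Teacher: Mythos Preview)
Your proposal is correct and matches the paper's approach: the paper states Corollary \ref{cro2.1} immediately after Theorem \ref{thm2.1} with no proof, relying implicitly on the same specialization $\mu:=\omega$ and on the observation (recorded in Section 2) that for a hyperbolic $\omega$ with $\beta_1\le\lambda^-(\omega)$, $\beta_2\le\lambda^+(\omega)$ one has $\omega\in\mathscr M_{\rm inv}(\widetilde\Lambda_\omega,f)$, so the Pesin set is non-empty and Theorem \ref{thm2.1} applies verbatim.
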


\section{Proof of Main Result}
In this section, we will verify theorem \ref{thm2.1}.
To obtain the lower bound estimate we need to construct
a suitable pseudo-orbit and a sequence of measures
to apply entropy distribution principle.
Our method is inspired by \cite{LiaLiaSUnTia},
\cite{PfiSul} and \cite{Tho2}.

\subsection{Katok's Definition of Metric Entropy}
We use the Katok's definition of Metric Entropy based on the following
lemma.

\begin{lem}\label{lem3.1}{\rm \cite{Kat2}}
Let $(M,d)$ be a compact metric space, $f:M\to M$ be a continuous map and $\nu$
be an ergodic invariant measure. For $\epsilon>0$, $\delta\in (0,1)$ let $N^\nu(n,\epsilon,\delta)$ denote the minimum
number of $\epsilon$-Bowen balls $B_n(x,\epsilon)$, which cover a set of $\nu$-measure at least $1-\delta$. Then
\begin{align*}
h_\nu(f)=\lim_{\epsilon\to0}\liminf_{n\to\infty}\frac{1}{n}\log N^\nu(n,\epsilon,\delta)=
\lim_{\epsilon\to0}\limsup_{n\to\infty}\frac{1}{n}\log N^\nu(n,\epsilon,\delta).
\end{align*}
\end{lem}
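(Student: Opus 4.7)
The plan is to prove the two inequalities separately. Since $\liminf \leq \limsup$ is trivial, the lemma reduces to showing
$$h_\nu(f) \;\leq\; \lim_{\epsilon\to 0}\liminf_{n\to\infty}\tfrac{1}{n}\log N^\nu(n,\epsilon,\delta) \quad\text{and}\quad \lim_{\epsilon\to 0}\limsup_{n\to\infty}\tfrac{1}{n}\log N^\nu(n,\epsilon,\delta) \;\leq\; h_\nu(f).$$
Since the bounds will hold for every fixed $\delta\in(0,1)$, the two limits in $\epsilon$ agree with $h_\nu(f)$.

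For the upper bound (the easier direction), I would fix $\epsilon>0$ and $\gamma>0$, and use compactness of $M$ to choose a finite measurable partition $\xi$ with $\mathrm{diam}(\xi)<\epsilon$; note $h_\nu(f,\xi)\leq h_\nu(f)$. Applying the Shannon--McMillan--Breiman theorem to $\xi^n=\bigvee_{i=0}^{n-1}f^{-i}\xi$, one gets $-\tfrac{1}{n}\log\nu(\xi^n(x))\to h_\nu(f,\xi)$ $\nu$-a.e. By Egorov, the set $A_n=\{x:\nu(\xi^n(x))\geq e^{-n(h_\nu(f,\xi)+\gamma)}\}$ satisfies $\nu(A_n)>1-\delta$ for all large $n$. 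The number of atoms of $\xi^n$ meeting $A_n$ is therefore at most $e^{n(h_\nu(f,\xi)+\gamma)}$. Because each atom $P\in\xi^n$ satisfies $f^i(P)\subset$ an atom of $\xi$ for $0\leq i\leq n-1$, its $d_n$-diameter is less than $\epsilon$, so $P\subset B_n(x,\epsilon)$ for any $x\in P$. This yields $N^\nu(n,\epsilon,\delta)\leq e^{n(h_\nu(f)+\gamma)}$, and letting $\gamma\to 0$ gives the bound uniformly in $\epsilon$.

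For the lower bound, I would appeal to the Brin--Katok local entropy formula: for $\nu$-a.e. $x$,
$$\lim_{\epsilon\to 0}\limsup_{n\to\infty} -\tfrac{1}{n}\log\nu(B_n(x,\epsilon)) \;=\; h_\nu(f).$$
Fix $\gamma>0$ and use Egorov to find, for all sufficiently small $\epsilon$, a set $A$ with $\nu(A)>1-\delta/2$ on which $\nu(B_n(y,2\epsilon))\leq e^{-n(h_\nu(f)-\gamma)}$ for all $n$ large. If $\{B_n(x_i,\epsilon)\}$ is any cover of a set of measure $\geq 1-\delta$, then whenever such a Bowen ball meets $A$ at some point $y$, the triangle inequality in $d_n$ gives $B_n(x_i,\epsilon)\subset B_n(y,2\epsilon)$, so each such ball has $\nu$-mass at most $e^{-n(h_\nu(f)-\gamma)}$. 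Since the cover must capture at least $1-3\delta/2$ of $\nu(A)$, the number of balls in the cover is bounded below by $(1-3\delta/2)\,e^{n(h_\nu(f)-\gamma)}$. Taking $n\to\infty$, then $\epsilon\to 0$, and finally $\gamma\to 0$ closes the gap.

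The main obstacle is the lower bound, whose cleanest derivation relies on the Brin--Katok formula, a nontrivial separate result. An alternative, closer to Katok's original argument, replaces Brin--Katok with a careful pigeonhole using Shannon--McMillan together with a comparison between refined partition atoms and Bowen balls; the numerology is more delicate but the logic is the same. Either route produces the two-sided bound and establishes equality with $h_\nu(f)$.
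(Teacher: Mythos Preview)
The paper does not supply its own proof of this lemma; it simply quotes the result from Katok \cite{Kat2} and uses it as a black box. So there is no ``paper's proof'' to compare against, only the original source.

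Your sketch is essentially the standard argument and is sound. Two small points are worth tightening. First, in the lower bound you take $\nu(A)>1-\delta/2$ and conclude the cover captures mass at least $1-3\delta/2$ on $A$; this quantity is nonpositive once $\delta\geq 2/3$. The fix is cosmetic: take $\nu(A)>1-(1-\delta)/2$, so that any set of measure $\geq 1-\delta$ meets $A$ in measure at least $(1-\delta)/2>0$, and the counting goes through for every $\delta\in(0,1)$. Second, when you invoke Brin--Katok plus Egorov you need two layers of uniformisation: first choose $\epsilon$ small enough that $\liminf_n -\tfrac{1}{n}\log\nu(B_n(x,2\epsilon))>h_\nu(f)-\gamma$ holds on a set of measure $>1-(1-\delta)/4$ (this uses monotonicity in $\epsilon$ of the local entropy and a measure-continuity argument, not Egorov directly), and only then apply Egorov to get a uniform $N$ on a slightly smaller set $A$. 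Your wording ``for all sufficiently small $\epsilon$'' glosses over the fact that the good set depends on $\epsilon$.

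You already flag the main structural point yourself: appealing to Brin--Katok is clean but imports a theorem of comparable depth. Katok's original proof in \cite{Kat2} avoids this by working directly with a partition $\xi$ of small diameter, the Shannon--McMillan--Breiman theorem, and a covering/packing comparison between cylinder sets of $\xi^n$ and Bowen balls (both directions), which is the ``alternative'' route you mention at the end.
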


Fix $\delta\in (0,1)$. For $\epsilon>0$ and $\nu\in\mathscr{M}_{erg}(M,f)$, we define
\begin{align*}
h_\nu^{Kat}(f,\epsilon):=\liminf_{n\to\infty}\frac{1}{n}\log N^\nu(n,\epsilon,\delta).
\end{align*}
Then by lemma \ref{lem3.1},
\begin{align*}
h_\nu(f)=\lim_{\epsilon\to0}h_\nu^{Kat}(f,\epsilon).
\end{align*}
If $\nu$ is non-ergodic, we will define $h_\nu^{Kat}(f,\epsilon)$ by the ergodic
decomposition of $\nu$. The following lemma is necessary.

\begin{lem}\label{lem3.2}
Fix $\epsilon,\delta>0$ and $n\in\mathbb{N}$, the function $s:\mathscr{M}_{erg}(M,f)\to\mathbb{R}$
defined by $\nu\mapsto N^\nu(n,\epsilon,\delta)$ is upper semi-continuous.
\end{lem}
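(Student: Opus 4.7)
The plan is to verify upper semi-continuity pointwise on $\mathscr M_{\rm erg}(M,f)$. Since $N^\nu(n,\epsilon,\delta)$ takes values in the positive integers, USC at a fixed $\nu_0$ reduces to producing a weak-$*$ neighborhood $\mathcal W$ of $\nu_0$ such that $N^\nu(n,\epsilon,\delta) \leq N^{\nu_0}(n,\epsilon,\delta)$ for every $\nu \in \mathcal W \cap \mathscr M_{\rm erg}(M,f)$.

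Set $N := N^{\nu_0}(n,\epsilon,\delta)$ and choose optimizing centers $x_1,\dots,x_N \in M$ so that the set $U := \bigcup_{i=1}^N B_n(x_i,\epsilon)$ satisfies $\nu_0(U) \geq 1 - \delta$. The key geometric observation is that each Bowen ball $B_n(x,\epsilon) = \{y : d(f^jx,f^jy)<\epsilon,\ 0 \leq j \leq n-1\}$ is \emph{open}, since the defining inequality is strict and every iterate of $f$ is continuous; hence $U$ is open. By the Portmanteau theorem, the map $\nu \mapsto \nu(U)$ is weak-$*$ lower semi-continuous on Borel probability measures, so $\liminf_{\nu \to \nu_0} \nu(U) \geq \nu_0(U)$.

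If $\nu_0(U) > 1-\delta$ strictly, lower semi-continuity at once delivers a neighborhood $\mathcal W$ of $\nu_0$ on which $\nu(U) > 1-\delta$; inside $\mathcal W$ the same $N$ Bowen balls of radius $\epsilon$ certify $N^\nu(n,\epsilon,\delta) \leq N$, and the lemma follows. The only delicate point is the boundary case $\nu_0(U) = 1-\delta$ exactly. To handle it I would refine the optimizing cover: by inner regularity of $\nu_0$ as a Radon measure on a compact metric space, pick a compact $K \subset U$ with $\nu_0(K)$ arbitrarily close to $\nu_0(U)$; apply the Lebesgue-number lemma to the finite open cover $\{B_n(x_i,\epsilon)\}_{i=1}^N$ of $K$ to obtain $\eta > 0$ with $K \subset \bigcup_{i=1}^N B_n(x_i,\epsilon-\eta)$. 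A small perturbation of the centers $x_i$ then produces $N$ Bowen balls of radius $\epsilon$ whose union has $\nu_0$-measure strictly greater than $1-\delta$, reducing to the strict-inequality case above.

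The main obstacle is precisely this boundary-case refinement, namely upgrading the optimizing cover so that $\nu_0(U)>1-\delta$ holds strictly. Once that is secured, the Portmanteau theorem closes the argument with essentially no further work, and the fact that $N^\nu(n,\epsilon,\delta)$ is integer-valued converts the open-set neighborhood statement into genuine upper semi-continuity.
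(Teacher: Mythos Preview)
Your argument is essentially the paper's: both hinge on the observation that $U=\bigcup_{i=1}^N B_n(x_i,\epsilon)$ is open and then invoke Portmanteau (lower semicontinuity of $\nu\mapsto\nu(U)$) to transfer the estimate to nearby measures. The paper phrases it sequentially (take $\nu_k\to\nu$), you phrase it via neighborhoods, but the content is identical.

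You are right to isolate the boundary case $\nu_0(U)=1-\delta$; in fact the paper simply writes ``$\nu(Z)>1-\delta$'' without justification, so it glosses over exactly the point you flag. However, your proposed repair does not close the gap. After shrinking to a compact $K\subset U$ and obtaining $K\subset\bigcup_i B_n(x_i,\epsilon-\eta)$ via a Lebesgue number, you assert that a small perturbation of the centers yields $N$ Bowen $\epsilon$-balls whose union has $\nu_0$-measure strictly exceeding $1-\delta$. There is no mechanism for this: any perturbed union still contains $K$, but $\nu_0(K)\le\nu_0(U)=1-\delta$, and nothing forces the new balls to capture additional mass from $M\setminus U$. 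If the supremum of $\nu_0\bigl(\bigcup_{i=1}^N B_n(y_i,\epsilon)\bigr)$ over all choices of centers $(y_1,\dots,y_N)$ happens to equal $1-\delta$ exactly, no rearrangement of $N$ balls can do better, and your reduction to the strict-inequality case fails. So the boundary case remains open in your write-up, just as it does (silently) in the paper's. If you want to salvage the downstream application, note that only Borel measurability of $\nu\mapsto N^\nu(n,\epsilon,\delta)$ is needed there, and this follows directly because $\nu\mapsto\sup_{y_1,\dots,y_N}\nu\bigl(\bigcup_i B_n(y_i,\epsilon)\bigr)$ is lower semicontinuous (a supremum of lower semicontinuous functions), so $\{\nu:N^\nu\le N\}$ is a $G_\delta$ set.
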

\begin{proof}
Let $\nu_k\to\nu$. Let $a>N^\nu(n,\epsilon,\delta)$; then there exists a set $S$
which $(n,\epsilon)$ span some set $Z$ with $\nu(Z)>1-\delta$ such that
$a>\#S$, where $\#S$ denote the number of elements in $S$.
If $k$ is large enough, then $\nu_k(\bigcup_{x\in S}B_n(x,\epsilon))>1-\delta$, which implies that
\begin{align*}
a>N^{\nu_{k}}(n,\epsilon,\delta).
\end{align*}
Thus we obtain
\begin{align*}
N^\nu(n,\epsilon,\delta)\geq\limsup_{k\to\infty}N^{\nu_{k}}(n,\epsilon,\delta),
\end{align*}
which completes the proof.
\end{proof}

Lemma \ref{lem3.2} tells us that the function $\overline{s}:\mathscr{M}_{erg}(M,f)\to\mathbb{R}$
defined by
\begin{align*}
\overline{s}(m)=h_m^{Kat}(f,\epsilon)
\end{align*}
is measurable.
Assume $\nu=\int_{\mathscr{M}_{erg}(M,f)}md\tau(m)$ is the ergodic decomposition of $\nu$.
Define
\begin{align*}
h_\nu^{Kat}(f,\epsilon):=\int_{\mathscr{M}_{erg}(M,f)}h_m^{Kat}(f,\epsilon)d\tau(m).
\end{align*}
By dominated convergence theorem, we have
\begin{align}\label{equ3.1}
h_\nu(f)=\int_{\mathscr{M}_{erg}(M,f)}\lim_{\epsilon\to0}h_m^{Kat}(f,\epsilon)d\tau(m)
=\lim_{\epsilon\to0}h_\nu^{Kat}(f,\epsilon).
\end{align}


\subsection{Some Lemmas}
For $\mu,\nu\in \mathscr{M}(M),$ define a compatible metric $D$ on
$\mathscr{M}(M)$ as follows:
\begin{align*}
D(\mu,\nu):=\sum\limits_{i\geq1}\frac{|\int\varphi_id\mu-\int\varphi_id\nu|}{2^{i+1}\|\varphi_i\|}
\end{align*}
where $\{\varphi_i\}_{i=1}^\infty$ is the dense subset of
$C^0(M)$. It is obvious that $D(\mu,\nu)\leq1$ for any $\mu,\nu\in \mathscr{M}(M).$

\begin{lem}\label{lem3.3}
Fix $\epsilon>0$.
For any integer $k\geq1$ and invariant measure
$\nu\in\mathscr{M}_{inv}(\widetilde{\Lambda}_{\mu},f)$,
there exists a finite convex combination of ergodic probability measures with
rational coefficients $\mu_k=\sum\limits_{j=1}^{s_k}a_{k,j}m_{k,j}$ such that
\begin{align*}
D(\nu,\mu_k)\leq\frac{1}{k}, m_{k,j}(\widetilde{\Lambda}_{\mu})=1, ~{\rm and} ~
h_\nu^{Kat}(f,\epsilon)\leq\sum_{j=1}^{s_{k}}a_{k,j}h_{m_{k,j}}^{Kat}(f,\epsilon).
\end{align*}
\end{lem}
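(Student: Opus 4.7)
The plan is to reduce the construction of $\mu_k$ to a careful finite approximation of the ergodic decomposition of $\nu$. Write $\nu = \int_{\mathscr{M}_{erg}(M,f)} m \, d\tau(m)$. Since $\nu(\widetilde{\Lambda}_\mu) = 1$ and the ergodic decomposition gives $\int m(\widetilde{\Lambda}_\mu)\, d\tau(m) = \nu(\widetilde{\Lambda}_\mu) = 1$, it follows that $\tau$-a.e.\ component $m$ satisfies $m(\widetilde{\Lambda}_\mu) = 1$. By the definition of $h^{Kat}$ for non-ergodic measures adopted in equation (\ref{equ3.1}), one has $h_\nu^{Kat}(f,\epsilon) = \int h_m^{Kat}(f,\epsilon)\, d\tau(m)$, and Lemma \ref{lem3.2} shows that the integrand is $\tau$-measurable on $\mathscr{M}_{erg}(M,f)$.

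Next, I would fix a Borel partition $\{P_1, \ldots, P_N\}$ of the weak$^*$-compact space $\mathscr{M}(M)$ with each piece of $D$-diameter at most $1/(2k)$, and retain only the indices $j = 1, \ldots, s_k$ with $b_j := \tau(P_j) > 0$. For each such $j$, the mean-value property of the Lebesgue integral produces an ergodic measure $m_{k,j} \in P_j$ with $m_{k,j}(\widetilde{\Lambda}_\mu) = 1$ and
\[
h_{m_{k,j}}^{Kat}(f,\epsilon) \geq b_j^{-1}\int_{P_j} h_m^{Kat}(f,\epsilon)\, d\tau(m).
\]
Setting $\mu^\sharp := \sum_j b_j m_{k,j}$, the diameter bound gives $D(\nu, \mu^\sharp) \leq 1/(2k)$, and summing in $j$ yields $\sum_j b_j h_{m_{k,j}}^{Kat}(f,\epsilon) \geq h_\nu^{Kat}(f,\epsilon)$.

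The main obstacle is rationalizing the real weights $b_j$ without breaking either inequality. The remedy is a weight-shifting trick: fix $j^\star$ with $h_{m_{k,j^\star}}^{Kat}(f,\epsilon) = \max_j h_{m_{k,j}}^{Kat}(f,\epsilon)$, pick rationals $a_{k,j} \in (0, b_j]$ arbitrarily close to $b_j$ for $j \neq j^\star$, and define $a_{k,j^\star} := 1 - \sum_{j \neq j^\star} a_{k,j} \geq b_{j^\star}$. A direct computation yields
\[
\sum_j a_{k,j} h_{m_{k,j}}^{Kat}(f,\epsilon) - \sum_j b_j h_{m_{k,j}}^{Kat}(f,\epsilon) = \sum_{j \neq j^\star}(b_j - a_{k,j})\bigl(h_{m_{k,j^\star}}^{Kat}(f,\epsilon) - h_{m_{k,j}}^{Kat}(f,\epsilon)\bigr) \geq 0,
\]
so the entropy inequality is preserved under the substitution, while taking $\sum_j |b_j - a_{k,j}|$ sufficiently small controls $D(\mu^\sharp, \sum_j a_{k,j} m_{k,j}) \leq 1/(2k)$ via the bound $D \leq \|\cdot\|_{TV}$. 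The triangle inequality then delivers $D(\nu, \mu_k) \leq 1/k$ with $\mu_k := \sum_j a_{k,j} m_{k,j}$, completing the construction.
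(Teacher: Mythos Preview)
Your argument is correct and follows essentially the same route as the paper: partition the space of ergodic components into pieces of small $D$-diameter, pick in each piece a representative whose Katok entropy is at least the average over that piece, and then perturb the resulting real weights to rationals while preserving both the $D$-closeness and the entropy inequality. Your weight-shifting device (loading the deficit onto the coordinate $j^\star$ of maximal entropy) is a more explicit justification of the rationalization step than the paper's brief assertion that such rationals can be chosen, and it has the pleasant side effect of guaranteeing $\sum_j a_{k,j}=1$ exactly.
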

\begin{proof}
Let
\begin{align*}
\nu=\int_{\mathscr{M}_{erg}(\widetilde{\Lambda}_{\mu},f)}md\tau(m)
\end{align*}
be the ergodic decomposition of $\nu$. Choose $N$ large enough such that
\begin{align*}
\sum\limits_{n=N+1}^{\infty}\frac{2}{2^{n+1}}<\frac{1}{3k}.
\end{align*}
We choose $\zeta>0$ such that $D(\nu_1,\nu_2)<\zeta$ implies that
\begin{align*}
\left|\int \varphi_n d\nu_1-\int \varphi_n d\nu_2\right|<\frac{\|\varphi_n\|}{3k},n=1,2,\cdots,N.
\end{align*}
Let $\{A_{k,1},A_{k,2},\cdots,A_{k,s_k}\}$ be a partition of $\mathscr{M}_{erg}(\widetilde{\Lambda}_{\mu},f)$
with diameter smaller than $\zeta$. For any $A_{k,j}$ there exists an ergodic $m_{k,j}\in A_{k,j}$ such that
\begin{align*}
\int_{A_{k,j}}h_m^{Kat}(f,\epsilon)d\tau(m)
\leq\tau(A_{k,j})h_{m_{k,j}}^{Kat}(f,\epsilon).
\end{align*}
Obviously $m_{k,j}(\widetilde{\Lambda}_{\mu})=1$ and
$h_{\nu}^{Kat}(f,\epsilon)\leq\sum_{j=1}^{s_{k}}\tau(A_{k,j})h_{m_{k,j}}^{Kat}(f,\epsilon)$.
Let us choose rational numbers $a_{k,j}>0$
such that
$$|a_{k,j}-\tau(A_{k,j})|<\frac{1}{3ks_k}$$
and
$$h_{\nu}^{Kat}(f,\epsilon)\leq\sum_{j=1}^{s_{k}}a_{k,j}h_{m_{k,j}}^{Kat}(f,\epsilon).$$
Let
\begin{align*}
\mu_k=\sum\limits_{j=1}^{s_k}a_{k,j}m_{k,j}.
\end{align*}
By ergodic decomposition theorem, one can readily verify that
\begin{align*}
\left|\int\varphi_n d\nu-\int \varphi_n d\mu_k\right|\leq\frac{2\|\varphi_n\|}{3k},n=1,\cdots,N.
\end{align*}
Thus, we obtain
\begin{align*}
D(\nu,\mu_k)\leq\frac{1}{k}.
\end{align*}
\end{proof}

\begin{lem}{\rm \cite{Boc}}\label{lem3.4}
Let $f:M\to M$ be a $C^{1}$ diffeomorphism of a compact Riemannian manifold and $\mu\in\mathscr M_{\rm inv}(M,f)$.
Let $\Gamma\subseteq M$ be a measurable set with $\mu(\Gamma)>0$ and let
\begin{align*}
\Omega=\bigcup_{n\in \mathbb{Z}}f^n(\Gamma).
\end{align*}
Take $\gamma>0$. Then there exists a measurable function $N_0:\Omega\to \mathbb{N}$ such that for a.e.$x\in\Omega$
and every $t\in[0,1]$ there is some $l\in\{0,1,\cdots,n\}$ such that $f^l(x)\in\Gamma$ and
$\left|(l/n)-t\right|<\gamma$.
\end{lem}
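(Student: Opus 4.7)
The plan is to derive this statement from the Birkhoff ergodic theorem applied to the indicator function $\chi_\Gamma:=\mathbf 1_\Gamma$, by showing that for $\mu$-typical $x\in\Omega$ the visit times to $\Gamma$ have a positive asymptotic density $c(x)>0$ which, once $n$ is large, is spread uniformly enough across every subinterval of $[0,n]$ of relative length $\gamma$ to catch every prescribed $t\in[0,1]$.

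First I would reduce to the ergodic case via the ergodic decomposition $\mu=\int m\,d\tau(m)$. For an ergodic component $m$ with $m(\Gamma)=0$, $f$-invariance forces $m(f^n\Gamma)=0$ for every $n\in\mathbb Z$, whence $m(\Omega)=0$; so $\mu$-a.e.\ $x\in\Omega$ lies in an ergodic component $m_x$ with $m_x(\Gamma)>0$, and Birkhoff's theorem gives
\[
c(x):=\lim_{n\to\infty}\frac{1}{n}\sum_{i=0}^{n-1}\chi_\Gamma(f^ix)=m_x(\Gamma)>0
\]
for almost every such $x$. The map $x\mapsto c(x)$ is measurable and $f$-invariant.

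For such $x$, let $N_1(x)$ be the smallest positive integer with
\[
\bigl|S_n\chi_\Gamma(x)-c(x)n\bigr|\le \frac{\gamma\, c(x)}{8}\,n \qquad\text{for every } n\ge N_1(x);
\]
this is measurable and a.e.\ finite by Birkhoff. I would then set $N_0(x):=\lceil 4\gamma^{-1}N_1(x)\rceil$. Given $n\ge N_0(x)$ and $t\in[0,1]$, the interval $I_t:=[(t-\gamma)n,(t+\gamma)n]\cap[0,n]$ has length at least $\gamma n$, and its intersection with $[N_1(x),n]$ contains an integer subinterval $[a,b]\subseteq I_t$ with $a,b\ge N_1(x)$ and $b-a\ge\gamma n/2$. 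Applying the Birkhoff estimate at both endpoints yields
\[
S_b\chi_\Gamma(x)-S_a\chi_\Gamma(x)\;\ge\; c(x)(b-a)-\frac{\gamma\, c(x)}{4}\,n \;\ge\; \frac{\gamma\, c(x)}{4}\,n \;>\;0,
\]
so, since the left side is a non-negative integer dominating a positive real, some $l\in[a,b]\subseteq I_t$ satisfies $f^lx\in\Gamma$, which gives $|l/n-t|<\gamma$ as required.

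The main technical obstacle is the measurability and a.e.\ finiteness of $N_0$: measurability of $N_1$ follows because it is the infimum of a countable family of measurable conditions, and a.e.\ finiteness is exactly Birkhoff's theorem. A secondary subtlety is ensuring that the Birkhoff-regular window $[N_1(x),n]$ meets a long enough portion of $I_t$ regardless of where $t$ sits in $[0,1]$, which is precisely why the factor $4/\gamma$ appears in the definition of $N_0$; endpoint cases $t=0$ or $t=1$ are harmless because $I_t$ is clipped to $[0,n]$ but still has length $\ge\gamma n$.
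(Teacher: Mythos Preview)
The paper does not prove this lemma; it merely cites it from Bochi \cite{Boc}, so there is no in-paper argument to compare against. Your proof via the ergodic decomposition and Birkhoff's theorem applied to $\chi_\Gamma$ is correct and is the standard route: positive asymptotic visit density $c(x)>0$ together with the two-sided Birkhoff estimate on a window $[a,b]\subseteq[N_1(x),n]$ of length $\ge\gamma n/2$ forces $S_b\chi_\Gamma(x)-S_a\chi_\Gamma(x)>0$, producing the required visit.

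Two cosmetic points. First, the statement asks for the strict inequality $|l/n-t|<\gamma$, whereas membership in $I_t$ only gives $\le\gamma$; defining $I_t$ with half-radius $\gamma/2$ (or noting that the visit produced satisfies $a\le l<b$ and handling the left endpoint by shifting $a$ inward by one) fixes this. Second, extracting an \emph{integer} subinterval $[a,b]$ from $I_t\cap[N_1(x),n]$ with $b-a\ge\gamma n/2$ costs at most $2$ in length, so you need $n\ge C/\gamma$ for an absolute constant $C$; absorb this into $N_0$ by setting, say, $N_0(x)=\max\bigl(\lceil 4\gamma^{-1}N_1(x)\rceil,\lceil 8\gamma^{-1}\rceil\bigr)$. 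Neither point affects the substance of the argument.
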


\begin{lem}(Entropy Distribution Principle {\rm \cite{Tho2}})
Let $f:M\to M$ be a continuous transformation. Let $Z\subseteq M$ be an arbitrary Borel set. Suppose there exists $\epsilon>0$
and $s\geq0$ such that one can find a sequence of Borel probability measures $\mu_k$, a constant $K>0$ and an integer $N$ satisfying
\begin{align*}
\limsup_{k\to\infty}\mu_{k}(B_n(x,\epsilon))\leq Ke^{-ns}
\end{align*}
for every ball $B_n(x,\epsilon)$ such that $B_n(x,\epsilon)\cap Z\neq\emptyset$ and $n\geq N$. Furthermore, assume that at least one  limit measure $\nu$ of the sequence $\mu_k$ satisfies $\nu(Z)>0$. Then $h_{top}(Z,\epsilon)>s$.
\end{lem}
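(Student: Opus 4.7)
The plan is to derive a positive lower bound on Bowen's outer measure $m(Z,s,\epsilon)$; since the definition gives $h_{top}(Z,\epsilon)=\inf\{t:m(Z,t,\epsilon)=0\}$, any bound of the form $m(Z,s,\epsilon)>0$ immediately forces $h_{top}(Z,\epsilon)\geq s$ (I read the stated conclusion ``$>s$" as $\geq s$, since this is what the standard argument yields). So the whole task reduces to transferring the hypothesis on $\mu_k$ to the limit measure $\nu$ and then summing over an arbitrary cover.

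The key step is the transfer to $\nu$. Fix a subsequence with $\mu_{k_j}\to\nu$ weakly*. Observe that each Bowen ball is open, since
\[
B_n(x,\epsilon)=\bigcap_{i=0}^{n-1}(f^i)^{-1}(B(f^ix,\epsilon))
\]
is a finite intersection of preimages of open metric balls under the continuous maps $f^i$. By the Portmanteau theorem applied to this open set, together with the standing hypothesis,
\[
\nu(B_n(x,\epsilon))\leq\liminf_{j\to\infty}\mu_{k_j}(B_n(x,\epsilon))\leq\limsup_{k\to\infty}\mu_k(B_n(x,\epsilon))\leq Ke^{-ns}
\]
for every $n\geq N$ and every $B_n(x,\epsilon)$ meeting $Z$.

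To finish, fix any $N'\geq N$ and any cover $\mathcal{C}=\{B_{n_i}(x_i,\epsilon)\}_i\in\Gamma_{N'}(Z,\epsilon)$. Discarding any member of $\mathcal{C}$ that does not meet $Z$ leaves a cover of $Z$ and can only decrease $\sum_i e^{-n_i s}$, so we may assume every ball in $\mathcal{C}$ intersects $Z$. Then countable subadditivity and the bound above give
\[
0<\nu(Z)\leq\sum_i\nu(B_{n_i}(x_i,\epsilon))\leq K\sum_i e^{-n_i s}.
\]
Taking the infimum over $\mathcal{C}\in\Gamma_{N'}(Z,\epsilon)$ yields $m(Z,s,N',\epsilon)\geq\nu(Z)/K$, and since $m(Z,s,\epsilon)=\lim_{N'\to\infty}m(Z,s,N',\epsilon)\geq m(Z,s,N,\epsilon)$, we conclude $m(Z,s,\epsilon)\geq\nu(Z)/K>0$, hence $h_{top}(Z,\epsilon)\geq s$.

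There is no serious obstacle. The one delicate bookkeeping point is matching the direction of inequality in Portmanteau: the assumption controls $\limsup\mu_k$ on open sets, while the natural open-set bound for $\nu$ is against $\liminf\mu_k$; this is harmless because $\liminf\leq\limsup$. The openness of Bowen balls, the continuity of $f$, and the reduction to covers whose members all meet $Z$ are the only small facts that need to be checked along the way.
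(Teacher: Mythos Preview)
The paper does not prove this lemma; it simply quotes it from Thompson \cite{Tho2}. Your argument is the standard one and is correct: pass to a weak* limit $\nu$, use openness of Bowen balls together with the Portmanteau inequality $\nu(U)\le\liminf\mu_{k_j}(U)$ to transfer the mass bound to $\nu$, and then sum over an arbitrary cover to get $m(Z,s,\epsilon)\ge \nu(Z)/K>0$.

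One small remark on the conclusion. Your reading of ``$>s$'' as ``$\ge s$'' is the right call: from $m(Z,s,\epsilon)>0$ the Carath\'eodory mechanism only gives $h_{top}(Z,\epsilon)\ge s$, since $m(Z,\cdot,\epsilon)$ may take a finite positive value exactly at the critical parameter. This is also how the result is stated in Thompson's paper, and it is all that is used later in the present article (the authors apply the lemma with $s=(\mathbf{C}-5\gamma)(1-\gamma)-\gamma$ and then let $\gamma\to0$, so the distinction is immaterial). Everything else in your write-up --- discarding cover elements disjoint from $Z$, the $\liminf\le\limsup$ bookkeeping, and the monotone passage from $m(Z,s,N',\epsilon)$ to $m(Z,s,\epsilon)$ --- is in order.
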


\subsection{Proof of Theorem \ref{thm2.1}}
It suffices to consider the case that the function $\nu\mapsto\int\varphi d\nu$ is
not constant for $\nu\in \mathscr M_{\rm inv}(\widetilde{\Lambda}_\mu,f)$.
Fix small $0<\gamma<1$ and $0<\delta<1$.  Let $\mathbf{C}:=\sup\{h_\nu(f):\nu\in \mathscr{M}_{inv}(\widetilde{\Lambda}_\mu,f)\}$. Obviously,
$\mathbf{C}$ is finite.  Choose  $\mu_1\in\mathscr{M}_{inv}(\widetilde{\Lambda}_\mu,f)$
such that
$$h_{\mu_1}(f)>\mathbf{C}-\gamma/3$$
and $\mu'\in \mathscr{M}_{inv}(\widetilde{\Lambda}_\mu,f)$ satisfies
$\int\varphi d\mu_1\neq \int\varphi d\mu'$. Let $\mu_2=t_1\mu_1+t_2\mu'$
where $t_1+t_2=1$ and $t_1\in(0,1)$ is chosen sufficiently close to 1 so that
$$h_{\mu_2}(f)>\mathbf{C}-2\gamma/3.$$
Obviously, $\int\varphi d\mu_1\neq \int\varphi d\mu_2$.
By (\ref{equ3.1}), we can choose $\epsilon'>0$ sufficiently small so
\begin{align*}
h^{Kat}_{\mu_1}(f,\epsilon')>\mathbf{C}-\gamma \text{ and }
h^{Kat}_{\mu_2}(f,\epsilon')>\mathbf{C}-\gamma.
\end{align*}
Let $\rho:\mathbb N \to\{1,2\}$ be given by $\rho(k)=(k+1)(\text{mod }2)+1$.
The following lemma can be easily obtained from lemma \ref{lem3.3}.

\begin{lem}\label{lem3.6}
For any integer $k\geq1$ and $\mu_1,\mu_2\in\mathscr{M}_{inv}(\widetilde{\Lambda}_\mu,f)$,
there exists a finite convex combination of ergodic probability measures with
rational coefficients $\nu_k=\sum\limits_{j=1}^{s_k}a_{k,j}m_{k,j}$ such that
\begin{align*}
D(\mu_{\rho(k)},\nu_k)\leq\frac{1}{k}, m_{k,j}(\widetilde{\Lambda}_{\mu})=1, ~{\rm and} ~
h_{\mu_{\rho(k)}}^{Kat}(f,\epsilon')\leq\sum_{j=1}^{s_{k}}a_{k,j}h_{m_{k,j}}^{Kat}(f,\epsilon').
\end{align*}
\end{lem}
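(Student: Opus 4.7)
The plan is to deduce Lemma~\ref{lem3.6} as an immediate specialization of Lemma~\ref{lem3.3}. Given $k\geq 1$, I would apply Lemma~\ref{lem3.3} with $\nu:=\mu_{\rho(k)}$ and $\epsilon:=\epsilon'$. Since $\mu_{\rho(k)}$ equals either $\mu_1$ or $\mu_2$ according to the parity of $k$, and both measures lie in $\mathscr{M}_{\mathrm{inv}}(\widetilde{\Lambda}_\mu,f)$ by the construction that precedes the statement, the hypotheses of Lemma~\ref{lem3.3} are satisfied verbatim.

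Applying Lemma~\ref{lem3.3} then produces, for this specific choice of $\nu$ and $\epsilon$, a finite convex combination $\nu_k=\sum_{j=1}^{s_k}a_{k,j}m_{k,j}$ of ergodic measures with rational coefficients $a_{k,j}$, each component satisfying $m_{k,j}(\widetilde{\Lambda}_\mu)=1$, with approximation $D(\mu_{\rho(k)},\nu_k)\leq 1/k$, and with the Katok-entropy inequality $h_{\mu_{\rho(k)}}^{Kat}(f,\epsilon')\leq\sum_{j=1}^{s_k}a_{k,j}h_{m_{k,j}}^{Kat}(f,\epsilon')$. These are exactly the three conclusions asserted by Lemma~\ref{lem3.6}, so repeating the construction for each $k$ yields the desired sequence $\{\nu_k\}_{k\geq 1}$.

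Because the content of Lemma~\ref{lem3.6} is literally Lemma~\ref{lem3.3} applied for each $k$ to the appropriate member of the pair $\{\mu_1,\mu_2\}$, there is no essential obstacle; the statement is isolated only for notational convenience, so that subsequent parts of the proof can refer to a single sequence $\{\nu_k\}$ that alternately approximates $\mu_1$ and $\mu_2$. The one point worth flagging — and it is mild — is that $\epsilon'$ must be chosen prior to invoking the lemma and held fixed across all $k$; this is already done in the discussion above the statement, where $\epsilon'$ was selected small enough that $h^{Kat}_{\mu_i}(f,\epsilon')>\mathbf{C}-\gamma$ for $i=1,2$, so using the same $\epsilon'$ throughout guarantees that the entropy information transferred to the ergodic components $m_{k,j}$ is uniformly large and ready to be used in the later pseudo-orbit construction.
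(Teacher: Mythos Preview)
Your proposal is correct and matches the paper's own treatment: the paper states explicitly that Lemma~\ref{lem3.6} ``can be easily obtained from lemma~\ref{lem3.3}'' and gives no separate proof. Your observation that one simply applies Lemma~\ref{lem3.3} with $\nu=\mu_{\rho(k)}$ and $\epsilon=\epsilon'$ for each $k$ is exactly the intended argument.
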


We choose a increasing sequence $l_k\to\infty$ such that
$m_{k,j}(\widetilde{\Lambda}_{l_k})>1-\gamma$ for all $1\leq j\leq s_k$.
Let $\eta=\frac{\epsilon'}{4\epsilon_0},$
it follows from weak shadowing lemma that there is a sequence of numbers $\{\delta_k\}.$
Let $\xi_k$ be a finite partition of $X$ with diam$(\xi_k)<\frac{\delta_{l_k}}{3}$
and $\xi_k\geq\{\widetilde{\Lambda}_{l_k}, M\setminus \widetilde{\Lambda}_{l_k} \}.$
For $n\in\mathbb{N},$ we consider the set
\begin{align*}
\Lambda^n(m_{k,j})=\{x\in\widetilde{\Lambda}_{l_k}:f^q(x)\in\xi_k(x){\rm ~for~some~} q\in[n,(1+\gamma)n] \\
{\rm~and~}D(\mathscr{E}_m(x),m_{k,j})<\frac{1}{k} {\rm~for~all~} m\geq n\},
\end{align*}
where $\xi_k(x)$ is the element in $\xi_k$ containing $x.$
By Birkhoff ergodic theorem and lemma \ref{lem3.4} we have
$m_{k,j}(\Lambda^n(m_{k,j}))\to m_{k,j}(\widetilde{\Lambda}_{l_k})$ as $n\to\infty$.
So, we can take $n_k\to\infty$ such that
\begin{align*}
m_{k,j}(\Lambda^n(m_{k,j}))>1-\delta
\end{align*}
for all $n\geq n_k$ and $1\leq j\leq s_k.$

For $k\in \mathbb{N}$, let
\begin{align*}
Q(\Lambda^n(m_{k,j}),\epsilon')&=\inf\{\sharp S:S \text{ is } (n,\epsilon') \text{ spanning set for } \Lambda^n(m_{k,j}) \},\\
P(\Lambda^n(m_{k,j}),\epsilon')&=\sup\{\sharp S:S \text{ is } (n,\epsilon') \text{ separated set for } \Lambda^n(m_{k,j}) \}.
\end{align*}
Then for all $n\geq n_k$ and $1\leq j\leq s_k$, we have
\begin{align*}
P(\Lambda^n(m_{k,j}),\epsilon')\geq Q(\Lambda^n(m_{k,j}),\epsilon')\geq N^{m_{k,j}}(n,\epsilon',\delta).
\end{align*}
We obtain
\begin{align*}
\liminf_{n\to\infty}\frac{1}{n}\log P(\Lambda^n(m_{k,j}),\epsilon')
\geq h^{Kat}_{m_{k,j}}(f,\epsilon').
\end{align*}
Thus for each $k\in \mathbb{N}$, we can choose $t_k$ large enough such that $\exp(\gamma t_k)>\sharp \xi_k$
and
\begin{align*}
\frac{1}{t_k}\log P(\Lambda^{t_k}(m_{k,j}),\epsilon')>h^{Kat}_{m_{k,j}}(f,\epsilon')-\gamma
\end{align*}
for $1\leq j\leq s_k$. Let $S(k,j)$
be a $(t_k,\epsilon')$-separated set for $\Lambda^{t_k}(m_{k,j})$ and
\begin{align*}
\# S(k,j)\geq\exp\left(t_k(h^{Kat}_{m_{k,j}}(f,\epsilon')-2\gamma)\right).
\end{align*}
For each $q\in[t_k,(1+\gamma)t_k],$ let
\begin{align*}
V_q=\{x\in S(k,j):f^q(x)\in\xi_k(x)\}
\end{align*}
and let $n=n(k,j)$ be the value of $q$ which maximizes $\#V_q.$ Obviously,
$n\geq t_k$ and $t_k\geq\frac{n}{1+\gamma}\geq n(1-\gamma).$
Since $\exp({\gamma t_k})\geq\gamma t_k+1,$ we have that
\begin{align*}
\#V_n\geq\frac{\#S(k,j)}{\gamma t_k+1}\geq \exp\left(t_k(h^{Kat}_{m_{k,j}}(f,\epsilon')-3\gamma)\right).
\end{align*}
Consider the element $A_n(m_{k,j})\in\xi_k$ such that $\#(V_n\cap A_n(m_{k,j}))$ is maximal.
Let $W_{n(k,j)}=V_{n(k,j)}\cap A_{n(k,j)}(m_{k,j})$. It follows that
\begin{align*}
\#W_{n(k,j)}\geq \frac{1}{\#\xi_k}\#V_n\geq \frac{1}{\#\xi_k}\exp\left(t_k(h^{Kat}_{m_{k,j}}(f,\epsilon')-3\gamma)\right).
\end{align*}
Since $\exp(\gamma t_k)>\sharp \xi_k$, $t_k\geq n(k,j)(1-\gamma)$ and $\#W_{n(k,j)}\geq1$, we have
\begin{align*}
\#W_{n(k,j)}\geq\exp\left(n(k,j)(1-\gamma)(h^{Kat}_{m_{k,j}}(f,\epsilon')-4\gamma)\right).
\end{align*}
Notice that $A_{n(k,j)}(m_{k,j})$ is contained in an open set $U(k,j)$ with  diam$(U(k,j))\leq3\text{diam}(\xi_k).$
By the ergodicity of $\mu,$ for any two measures $m_{k_1,j_1},m_{k_2,j_2}$ and any natural number $N$,
there exists $s=s(k_1,j_1,k_2,j_2)>N$ and $y=y(k_1,j_1,k_2,j_2)\in U(k_1,j_1)\cap\widetilde{\Lambda}_{l_{k_1}}$  such that
$f^s(y)\in U(k_2,j_2)\cap\widetilde{\Lambda}_{l_{k_2}}.$
Letting $C_{k,j}=\frac{a_{k,j}}{n(k,j)},$ we can choose an integer $N_k$ large enough so that $N_kC_{k,j}$ are integers and
\begin{align}\label{equ1}
N_k\geq k\sum\limits_{\substack{1\leq r_1,r_2\leq k+1 \\ 1\leq j_i\leq s_{r_i},i=1,2}}s(r_1,j_1,r_2,j_2).
\end{align}
Let $X_k=\sum\limits_{j=1}^{s_k-1}s(k,j, k,j+1)+s(k,s_k, k,1)$  and
\begin{align}\label{equ2}
Y_k=\sum\limits_{j=1}^{s_k}N_kn(k,j)C_{k,j}+X_k=N_k+X_k,
\end{align}
then we have
\begin{align}
\frac{N_k}{Y_k}\geq \frac{1}{1+\frac{1}{k}}\geq 1-\frac{1}{k}.\label{equ3}
\end{align}
Choose a strictly increasing sequence $\{T_k\}$ with $T_k\in\mathbb{N},$
\begin{align}\label{equ4}
Y_{k+1}\leq\frac{1}{k+1}\sum\limits_{r=1}^kY_rT_r,
\sum\limits_{r=1}^k(Y_rT_r+s(r,1,r+1,1))\leq \frac{1}{k+1}Y_{k+1}T_{k+1}.
\end{align}
For $x\in X,$ we define segments of orbits
\begin{align*}
L_{k,j}(x)&:=(x,f(x),\cdots,f^{n(k,j)-1}(x)), 1\leq j\leq s_k,\\
\widehat{L}_{k_1,j_1,k_2,j_2}(x)&:=(x,f(x),\cdots,f^{s(k_1,j_1,k_2,j_2)-1}(x)),1\leq j_i\leq s_{k_i},i=1,2.
\end{align*}
Consider the pseudo-orbit with finite length
\begin{align*}
O_k=O( &x(1,1,1,1), \cdots,x(1,1,1,N_1C_{1,1}),\cdots,
x(1,s_1,1,1),\cdots,x(1,s_1,1,N_1C_{1,s_1});\\&\cdots;\\
&x(1,1,T_1,1),\cdots,x(1,1,T_1,N_1C_{1,1}),\cdots,x(1,s_1,T_1,1),\cdots, x(1,s_1,T_1,N_1C_{1,s_1});\\
&\vdots\\&
x(k,1,1,1), \cdots,x(k,1,1,N_kC_{k,1}),\cdots,
x(k,s_k,1,1),\cdots,x(k,s_k,1,N_kC_{k,s_k});\\&\cdots;\\&
x(k,1,T_k,1), \cdots,x(k,1,T_k,N_kC_{k,1}),\cdots,
x(k,s_k,T_k,1),\cdots,x(k,s_k,T_k,N_kC_{k,s_k});
)
\end{align*}
with the precise form as follows:
\begin{align*}
\{ &L_{1,1}(x(1,1,1,1)), \cdots,L_{1,1}(x(1,1,1,N_1C_{1,1})),\widehat{L}_{1,1,1,2}(y(1,1,1,2));\\&
L_{1,2}(x(1,2,1,1)), \cdots,L_{1,2}(x(1,2,1,N_1C_{1,2})),\widehat{L}_{1,2,1,3}(y(1,2,1,3));
\cdots,\\&
L_{1,s_1}(x(1,s_1,1,1)), \cdots,L_{1,s_1}(x(1,s_1,1,N_1C_{1,s_1})),\widehat{L}_{1,s_1,1,1}(y(1,s_1,1,1));\\&
\cdots,\\&
L_{1,1}(x(1,1,T_1,1)), \cdots,L_{1,1}(x(1,1,T_1,N_1C_{1,1})),\widehat{L}_{1,1,1,2}(y(1,1,1,2));\\&
L_{1,2}(x(1,2,T_1,1)), \cdots,L_{1,2}(x(1,2,T_1,N_1C_{1,2})),\widehat{L}_{1,2,1,3}(y(1,2,1,3));
\cdots,\\&
L_{1,s_1}(x(1,s_1,T_1,1)), \cdots,L_{1,s_1}(x(1,s_1,T_1,N_1C_{1,s_1})),\widehat{L}_{1,s_1,1,1}(y(1,s_1,1,1));\\&
\widehat{L}(y(1,1,2,1));\\&\vdots,\\&
L_{k,1}(x(k,1,1,1)), \cdots,L_{k,1}(x(k,1,1,N_kC_{k,1})),\widehat{L}_{k,1,k,2}(y(k,1,k,2));\\&
L_{k,2}(x(k,2,1,1)), \cdots,L_{k,2}(x(k,2,1,N_kC_{k,2})),\widehat{L}_{k,2,k,3}(y(k,2,k,3));\cdots\\&
L_{k,s_k}(x(k,s_k,1,1)), \cdots,L_{k,s_k}(x(k,s_k,1,N_kC_{k,s_k})),\widehat{L}_{k,s_k,k,1}(y(k,s_k,k,1));\\
&\cdots\\&
L_{k,1}(x(k,1,T_k,1)), \cdots,L_{k,1}(x(k,1,T_k,N_kC_{k,1})),\widehat{L}_{k,1,k,2}(y(k,1,k,2));\\&
L_{k,2}(x(k,2,T_k,1)), \cdots,L_{k,2}(x(k,2,T_k,N_kC_{k,2})),\widehat{L}_{k,2,k,3}(y(k,2,k,3));\cdots\\&
L_{k,s_k}(x(k,s_k,T_k,1)), \cdots,L_{k,s_k}(x(k,s_k,T_k,N_kC_{k,s_k})),\widehat{L}_{k,s_k,k,1}(y(k,s_k,k,1));\\&
\widehat{L}(y(k,1,k+1,1));
\},
\end{align*}
where $x(q,j,i,t)\in W_{n(q,j)}.$

For $1\leq q\leq k,1\leq i\leq T_q,1\leq j\leq s_q, 1\leq t\leq N_qC_{q,j},$ let $M_1=0,$
\begin{align*}
M_q&=M_{q,1}=\sum\limits_{r=1}^{q-1}(T_rY_r+s(r,1,r+1,1)),\\
M_{q,i}&=M_{q,i,1}=M_q+(i-1)Y_q,\\
M_{q,i,j}&=M_{q,i,j,1}=M_{q,i}+\sum\limits_{p=1}^{j-1}(N_qn(q,p)C_{q,p}+s(k,p,k,p+1)),
\\
M_{q,i,j,t}&=M_{q,i,j}+(t-1)n(q,j).
\end{align*}
By weak shadowing lemma, there exist at least one shadowing point $z$ of $O_k$ such that
\begin{align*}
d(f^{M_{q,i,j,t}+p}(z),f^p(x(q,j,i,t)))\leq \eta\epsilon_0\exp(-\epsilon l_q)\leq\frac{\epsilon'}{4\epsilon_0}\epsilon_0\exp(-\epsilon l_q)\leq \frac{\epsilon'}{4},
\end{align*}
for $1\leq q\leq k, 1\leq i\leq T_q, 1\leq j\leq s_q, 1\leq t\leq N_qC_{q,j}, 1\leq p\leq n(q,j)-1.$
Let $B(x(1,1,1,1),\cdots,x(k,s_k,T_k,N_kC_{k,s_k}))$
be the set of all shadowing points for the above pseudo-orbit. Precisely,
\begin{align*}
B(&x(1,1,1,1),\cdots,x(k,s_k,T_k,N_kC_{k,s_k}))=\\
B(&x(1,1,1,1),\cdots,x(1,1,1,N_1C_{1,1},),\cdots, x(1,s_1,1,1), \cdots, x(1,s_1,1,N_1C_{1,s_1});\\&\cdots;\\&
x(1,1,T_1,1),\cdots,x(1,1,T_1,N_1C_{1,1},),\cdots, x(1,s_1,T_1,1), \cdots, x(1,s_1,T_1,N_1C_{1,s_1});\\&\cdots;\\&
x(k,1,T_1,1),\cdots,x(k,1,1,N_kC_{k,1},),\cdots, x(k,s_k,1,1), \cdots, x(k,s_k,1,N_kC_{k,s_k});\\&\cdots;\\&
x(k,1,T_k,1),\cdots,x(k,1,T_k,N_kC_{k,1},),\cdots, x(k,s_k,T_k,1), \cdots, x(k,s_k,T_k,N_kC_{k,s_k})).
\end{align*}
Then the set $B(x(1,1,1,1),\cdots,x(k,s_k,T_k,N_kC_{k,s_k}))$ can be considered
as a map with variables $x(q,j,i,t)$.
We define $F_k$ by
\begin{align*}
F_k=\bigcup\{B(&x(1,1,1,1),\cdots,x(k,s_k,T_k,N_kC_{k,s_k})):\\
&x(1,1,1,1)\in W_{n(1,1)},\cdots,x(k,s_k,T_k,N_kC_{k,s_k})\in W_{n(k,s_k)}\}.
\end{align*}
Obviously, $F_k$ is non-empty compact and $F_{k+1}\subseteq F_{k}$. Define $F=\bigcap_{k=1}^{\infty}F_k$.

\begin{lem}\label{lem3.7}
For any $z\in F$,
\begin{align*}
\lim\limits_{k\to\infty}\mathscr{E}_{M_{2k}}(z)=\mu_1, \lim\limits_{k\to\infty}\mathscr{E}_{M_{2k+1}}(z)=\mu_2,
\end{align*}
where $M_q=\sum\limits_{r=1}^{q-1}(T_rY_r+s(r,1,r+1,1)),q=1,2,\cdots$.
\end{lem}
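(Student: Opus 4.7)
The plan is to decompose $[0, M_{q+1})$ according to the multi-level structure of the pseudo-orbit $O_k$, to show that the empirical measure along level $r$ approximates $\mu_{\rho(r)}$ in the metric $D$ with error $O(1/r)$, and then to use the dominance condition (\ref{equ4}) to conclude that $\mathscr{E}_{M_{q+1}}(z)$ is determined, up to an $O(1/q)$ error in $D$, by level $q$ alone. Since $\rho$ alternates with $\rho(\text{odd}) = 1$ and $\rho(\text{even}) = 2$, setting $q = 2k-1$ and $q = 2k$ then yields $\mathscr{E}_{M_{2k}}(z) \to \mu_1$ and $\mathscr{E}_{M_{2k+1}}(z) \to \mu_2$ respectively.

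For a fixed level $r \leq q$, one copy of the $Y_r$-block has length $Y_r = N_r + X_r$, split between the $L$-segments $L_{r,j}(x(r,j,i,t))$ (total length $N_r$, with the $L_{r,j}$-type contributing $n(r,j)\cdot N_r C_{r,j} = a_{r,j} N_r$) and the short $\widehat{L}$-connectors (total length $X_r$). Weak shadowing gives $d(f^{M_{r,i,j,t}+p}(z), f^p(x(r,j,i,t))) \leq \eta \epsilon_0 e^{-\epsilon l_r}$ for $0 \leq p \leq n(r,j)-1$, with $l_r \to \infty$; by uniform continuity of each test function $\varphi_i$ defining $D$, the empirical measure of $z$ along this $L_{r,j}$-segment differs from $\mathscr{E}_{n(r,j)}(x(r,j,i,t))$ by $o_r(1)$ in $D$, and the latter is within $1/r$ of $m_{r,j}$ since $x(r,j,i,t) \in W_{n(r,j)} \subseteq \Lambda^{n(r,j)}(m_{r,j})$. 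Summing with weights $a_{r,j}$ recovers $\nu_r = \sum_j a_{r,j} m_{r,j}$; absorbing the $X_r/Y_r \leq 1/r$ connector fraction via $D \leq 1$ (using (\ref{equ3})), and invoking Lemma \ref{lem3.6} for $D(\nu_r, \mu_{\rho(r)}) \leq 1/r$, the empirical measure along one (hence the average over all $T_r$) copies of the $Y_r$-block is $O(1/r)$-close to $\mu_{\rho(r)}$ in $D$.

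Next, condition (\ref{equ4}) yields $\sum_{r=1}^{q-1}(T_r Y_r + s(r,1,r+1,1)) \leq Y_q T_q/q$, while the trailing transition $s(q,1,q+1,1)$ appended to level $q$ is dominated by $Y_q/q$ via (\ref{equ1})--(\ref{equ2}). Since $M_{q+1} \geq Y_q T_q$, levels $1, \ldots, q-1$ together with the transitions occupy only an $O(1/q)$ fraction of $[0, M_{q+1})$; consequently $\mathscr{E}_{M_{q+1}}(z)$ is an $O(1/q)$-perturbation in $D$ of the empirical measure along level $q$, so $D(\mathscr{E}_{M_{q+1}}(z), \mu_{\rho(q)}) = O(1/q) \to 0$. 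Letting $k \to \infty$ with $q = 2k-1$ and $q = 2k$ completes the proof.

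The main obstacle is the simultaneous bookkeeping of the four $1/r$-sized error terms --- the shadowing error $\eta\epsilon_0 e^{-\epsilon l_r}$, the Birkhoff condition in $\Lambda^{n(r,j)}(m_{r,j})$, the $D$-approximation $\nu_r \to \mu_{\rho(r)}$ from Lemma \ref{lem3.6}, and the connector fraction $X_r/Y_r$ --- together with the dominance of level $q$ guaranteed by the rapid growth of $\{T_k\}$. This is the precise purpose of the choices $\{l_k\}$, $\{N_k\}$, $\{T_k\}$ codified in (\ref{equ1})--(\ref{equ4}); once the decomposition above is in place, the verification reduces to routine estimates.
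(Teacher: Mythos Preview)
Your proposal is correct and follows essentially the same approach as the paper: isolate the last level $q$ of the pseudo-orbit, show its empirical measure is close to $\mu_{\rho(q)}$ via the chain of approximations (shadowing error, the Birkhoff condition in $\Lambda^{t_r}(m_{r,j})$, Lemma~\ref{lem3.6}, and the connector fraction $X_r/Y_r$), and then use the growth condition (\ref{equ4}) to make the earlier levels negligible. The only cosmetic difference is that the paper fixes a single test function $\psi\in C^0(M)$ and proves $\frac{1}{M_{k+1}}S_{M_{k+1}}\psi(z)\to\int\psi\,d\mu_{\rho(k)}$, whereas you work directly with the metric $D$; since $D$ generates the weak-$\ast$ topology and is bounded by $1$, the two formulations are equivalent. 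One small notational slip: you write $W_{n(r,j)}\subseteq\Lambda^{n(r,j)}(m_{r,j})$, but by construction $W_{n(r,j)}\subseteq S(r,j)\subseteq\Lambda^{t_r}(m_{r,j})$; the conclusion $D(\mathscr{E}_{n(r,j)}(x),m_{r,j})<1/r$ still holds because $n(r,j)\geq t_r$.
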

\begin{proof}
It siffices to prove that for any $\psi\in C^0(M)$,
\begin{align*}
\lim\limits_{k\to\infty} \left|\frac{1}{M_{k+1}}\sum\limits_{i=0}^{M_{k+1}-1}\psi(f^iz)-\int\psi d\mu_{\rho(k)}\right|=0.
\end{align*}
Assume that
\begin{align*}
z\in B(x(1,1,1,1),\cdots,x(k,s_k,T_k,N_kC_{k,s_k})).
\end{align*}
For $c>0$, let $\text{Var}(\psi,c)=\sup\{|\psi(x)-\psi(y)|:d(x,y)\leq c\}$.
We have
\begin{align*}
&\left|S_{T_kY_k+s(k,1,k+1,1)}\psi(f^{M_k}(z))-(T_kY_k+s(k,1,k+1,1))\int \psi d\mu_{\rho(k)}\right|\\
\leq&\left|S_{T_kY_k+s(k,1,k+1,1)}\psi(f^{M_k}(z))-T_kN_k\int\psi d\mu_{\rho(k)}\right|\\
&+\left|T_kN_k-(T_kY_k+s(k,1,k+1,1))\right|\|\psi\|\\
\leq&\left|S_{T_kY_k}\psi(f^{M_k}(z))-T_kN_k\int\psi d\nu_k\right|+T_kN_k\left|\int\psi d\nu_k-\int\psi d\mu_{\rho(k)}\right|\\
&+s(k,1,k+1,1)\|\psi\|+\left|T_kN_k-(T_kY_k+s(k,1,k+1,1))\right|\|\psi\|.
\end{align*}
Since $C_{k,j}n(k,j)=a_{k,j}$, we have
\begin{align*}
&\left|S_{T_kY_k}\psi(f^{M_k}(z))-T_kN_k\int\psi d\nu_k\right|\\
=&\left|S_{T_kY_k}\psi(f^{M_k}(z))-T_k\sum_{j=1}^{s_k}N_kC_{k,j}n(k,j)\int \psi dm_{k,j}\right|\\
\leq&\left|\sum_{i=1}^{T_k}\sum_{j=1}^{s_k}\sum_{t=1}^{N_kC_{k,j}} S_{n(k,j)}\psi(f^{M_{k,i,j,t}}(z))-
   T_k\sum_{j=1}^{s_k}N_kC_{k,j}n(k,j)\int \psi dm_{k,j}\right|+T_kX_{k}\|\psi\|\\
\leq&\left|\sum_{i=1}^{T_k}\sum_{j=1}^{s_k}\sum_{t=1}^{N_kC_{k,j}}\sum_{q=0}^{n(k,j)-1}
\psi(f^{M_{k,i,j,t}+q}(z))-\sum_{i=1}^{T_k}\sum_{j=1}^{s_k}\sum_{t=1}^{N_kC_{k,j}}\sum_{q=0}^{n(k,j)-1}
\psi(f^{q}(x(k,j,i,t)))\right|\\
&+\sum_{i=1}^{T_k}\sum_{j=1}^{s_k}\sum_{t=1}^{N_kC_{k,j}}
n(k,j)\left|\frac{1}{n(k,j)}S_{n(k,j)}\psi(x(k,j,i,t))-\int \psi dm_{k,j}\right|+T_kX_{k}\|\psi\|\\
\leq&\sum_{i=1}^{T_k}\sum_{j=1}^{s_k}\sum_{t=1}^{N_kC_{k,j}}n(k,j)\left|\frac{1}{n(k,j)}S_{n(k,j)}\psi(x(k,j,i,t))-\int \psi dm_{k,j}\right|\\
    &+T_kY_k\text{Var}(\psi,\frac{\epsilon'}{4}\exp(-\epsilon l_k))+T_kX_{k}\|\psi\|.
\end{align*}
By
\begin{align*}
D(\mu_{\rho(k)},\nu_k)\leq\frac{1}{k}, D(\mathscr{E}_{n(k,j)}(x(k,j,i,t)),m_{k,j})<\frac{1}{k},
\end{align*}
and inqualities (\ref{equ1}), (\ref{equ2}) and (\ref{equ3}), we have
\begin{align*}
\lim_{k\to\infty}\left|\frac{S_{T_kY_k+s(k,1,k+1,1)}\psi(f^{M_k}(z))}{T_kY_k+s(k,1,k+1,1)}-
\int \psi d\mu_k\right|=0.
\end{align*}
One can readily verify that
\begin{align*}
\lim_{k\to\infty}\frac{T_kY_k+s(k,1,k+1,1)}{M_{k+1}}=1.
\end{align*}
Since
\begin{align*}
&\left|\frac{1}{M_{k+1}}S_{M_{k+1}}\psi(z)-
\frac{S_{T_kY_k+s(k,1,k+1,1)}\psi(f^{M_k}(z))}{T_kY_k+s(k,1,k+1,1)}\right|\\
=&\left|\frac{1}{M_{k+1}}S_{M_k}\psi(z)
+\frac{S_{T_kY_k+s(k,1,k+1,1)}\psi(f^{M_k}(z))}{M_{k+1}}
-\frac{S_{T_kY_k+s(k,1,k+1,1)}\psi(f^{M_k}(z))}{T_kY_k+s(k,1,k+1,1)}\right|\\
=&\left|\frac{1}{M_{k+1}}S_{M_k}\psi(z)
+\frac{S_{T_kY_k+s(k,1,k+1,1)}\psi(f^{M_k}(z))}{T_kY_k+s(k,1,k+1,1)}
\left(\frac{T_kY_k+s(k,1,k+1,1)}{M_{k+1}}-1\right)\right|\\
\leq&\frac{M_k}{M_{k+1}}\|\psi\|+\|\psi\|\left|\frac{T_kY_k+s(k,1,k+1,1)}{M_{k+1}}-1\right|,
\end{align*}
we deduce that
\begin{align*}
\lim_{k\to\infty}\left|\frac{1}{M_{k+1}}S_{M_{k+1}}\psi(z)-\int \psi d\mu_k\right|=0,
\end{align*}
which completes the proof.
\end{proof}

From lemma \ref{lem3.7}, we have
$F\subset\widehat{M}(\varphi,f)\cap N(\widetilde{\Lambda}_\mu)$.
Next, we construct a sequence of measures to compute the topological entropy of $F$. We first undertake an intermediate
constructions. For each
\begin{align*}
\underline{x}=(x(1,1,1,1),\cdots,x(k,s_k,T_k,N_kC_{k,s_k}))\in W_{n(1,1)}\times\cdots\times W_{n(k,s_k)},
\end{align*}
we choose one point $z=z(\underline{x})$ such that
\begin{align*}
z\in B(x(1,1,1,1),\cdots,x(k,s_k,T_k,N_kC_{k,s_k}))
\end{align*}
Let $L_k$ be the set of all points constructed in this way. Fix the position indexed
$m,j,i,t,$ for distinct $x(m,j,i,t),x'(m,j,i,t)\in W_{n(m,j)},$ the corresponding shadowing points $z,z'$ satisfying
\begin{align*}
 &d(f^{M_{m,i,j,t}+q}(z), f^{M_{m,i,j,t}+q}(z'))\\
 \geq &d(f^q(x(m,j,i,t)),f^q(x'(m,j,i,t)))-d(f^{M_{m,i,j,t}+q}(z),f^q(x(m,j,i,t)))\\
       &-d(f^{M_{m,i,j,t}+q}(z'),f^q(x'(m,j,i,t)))\\
 \geq &d(f^q(x(m,j,i,t)),f^q(x'(m,j,i,t)))-\frac{\epsilon'}{2}.
\end{align*}
Noticing that $x(m,j,i,t),x'(m,j,i,t)$ are $(n(m,j),\epsilon')$-separated, we obtain
$f^{M_{m,i,j,t }}(z)$, $f^{M_{m,i,j,t }}(z')$ are $(n(m,j),\epsilon'/2)$-separated.
Thus
\begin{align*}
\sharp L_k=(\sharp W_{n(1,1)}^{N_1C_{1,1}}\sharp W_{n(1,2)}^{N_1C_{1,2}}\cdots\sharp W_{n(1,s_1)}^{N_1C_{1,s_1}})^{T_1}\cdots
(\sharp W_{n(k,1)}^{N_kC_{k,1}}\sharp W_{n(k,2)}^{N_kC_{k,2}}\cdots\sharp W_{n(k,s_k)}^{N_kC_{k,s_k}})^{T_k}.
\end{align*}
We now define, for each $k$, an atomic measure centred on $L_k$. Precisely, let
\begin{align*}
\alpha_k=\frac{\sum_{z\in L_k}\delta_z}{\sharp L_k}
\end{align*}
In order to prove the main results of this paper, we present some lemmas.
\begin{lem}
Suppose $\nu$ is a limit measure of the sequence of probability
measures $\alpha_k.$ Then $\nu(F)=1.$
\end{lem}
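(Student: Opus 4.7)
The plan is to exploit the nested-compact structure of the sets $F_k$ together with the weak$^*$ portmanteau theorem. By construction, each point of $L_k$ is a shadowing point of one of the finite pseudo-orbits indexed by a tuple $\underline{x}\in W_{n(1,1)}\times\cdots\times W_{n(k,s_k)}$, so $L_k\subseteq F_k$. Consequently $\alpha_k$ is a convex combination of point masses supported in $F_k$, i.e.\ $\alpha_k(F_k)=1$.

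Next I would use the nesting $F_{k+1}\subseteq F_k$ (already noted just before the definition of $F$) to observe that for every fixed $m\in\mathbb{N}$ and every $k\geq m$ one has $L_k\subseteq F_k\subseteq F_m$, hence $\alpha_k(F_m)=1$. Let $\{\alpha_{k_j}\}$ be a subsequence with $\alpha_{k_j}\to\nu$ in the weak$^*$ topology. Since each $F_m$ is compact, hence closed, the portmanteau theorem gives
\begin{align*}
\nu(F_m)\;\geq\;\limsup_{j\to\infty}\alpha_{k_j}(F_m)\;=\;1,
\end{align*}
so $\nu(F_m)=1$ for every $m$.

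Finally, because $F=\bigcap_{m\geq 1}F_m$ is a decreasing intersection of sets of full $\nu$-measure, continuity of $\nu$ from above yields
\begin{align*}
\nu(F)\;=\;\lim_{m\to\infty}\nu(F_m)\;=\;1,
\end{align*}
which is the claim. There is no real obstacle here beyond bookkeeping: the only things one must verify are the inclusion $L_k\subseteq F_k$ (immediate from the definition of $L_k$ as a set of shadowing points produced by the weak shadowing lemma) and the closedness of each $F_k$ (already recorded in the text). The non-triviality of the construction in the previous lemmas is what forces $F$ to be non-empty and the $\alpha_k$ to exist; once one has these, the present lemma is a soft weak$^*$ argument.
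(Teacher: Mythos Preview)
Your proof is correct and follows essentially the same approach as the paper's: both use $L_k\subseteq F_k$ (implicit in the paper, explicit in your write-up), the nesting $F_{k+1}\subseteq F_k$ to get $\alpha_k(F_m)=1$ for $k\geq m$, the portmanteau inequality on the closed sets $F_m$, and continuity from above. Your version is simply more detailed than the paper's three-line argument.
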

\begin{proof}
Suppose $\nu=\lim_{k\to\infty}\alpha_{l_k}$ for $l_k\to\infty$. For any fixed $l$ and all
$p\geq0$, $\alpha_{l+p}(F_l)=1$ since $F_{l+p}\subset F_l$. Thus, $\nu(F_l)\geq\limsup_{k\to\infty}\alpha_{l_k}(F_l)=1$.
It follows that $\nu(F)=\lim_{l\to\infty}\nu(F_l)=1$.
\end{proof}

Let $\EuScript{B}=B_n(x,\frac{\epsilon'}{8})$ be an arbitrary ball which intersets $F$. Let $k$ be an unique number satisfies
$M_{k+1}\leq n<M_{k+2}$. Let $i\in\{1,\cdots,T_{k+1}\}$ be the unique number so
\begin{align*}
M_{k+1,i}\leq n<M_{k+1,i+1}.
\end{align*}
Here we appoint $M_{k+1,T_{k+1}+1}=M_{k+2,1}$. We assume that $i\geq2$, the simpler case $i=1$ is similar.

\begin{lem}
For $p\geq 1$,
\begin{align*}
\alpha_{k+p}(B_{n}(x,\frac{\epsilon'}{8}))\leq (\sharp L_k(\sharp W_{n(k+1,1)}^{N_{k+1}C_{k+1,1}}\sharp W_{n(k+1,2)}^{N_{k+1}C_{k+1,2}}\cdots\sharp W_{n(k+1,s_{k+1})}^{N_{k+1}C_{k+1,s_{k+1}}})^{i-1})^{-1}.
\end{align*}
\end{lem}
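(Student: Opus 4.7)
My plan is to exploit the fact that $\alpha_{k+p}$ is uniformly distributed on the finite set $L_{k+p}$, so that
\begin{align*}
\alpha_{k+p}(B_n(x,\epsilon'/8)) = \frac{\sharp\{z\in L_{k+p}:z\in B_n(x,\epsilon'/8)\}}{\sharp L_{k+p}},
\end{align*}
and then to bound the numerator by showing that the coordinates $x(m,j,i',t)$ of an admissible tuple $\underline{x}$ are forced to agree for any two shadowing points lying in the Bowen ball, provided the orbital window $[M_{m,i',j,t},\,M_{m,i',j,t}+n(m,j))$ associated with the coordinate is contained in $[0,n)$.

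The main step is a separation argument. Assume $z,z'\in L_{k+p}\cap B_n(x,\epsilon'/8)$ arise from tuples $\underline{x},\underline{x}'$; the triangle inequality yields $d(f^q z,f^q z')<\epsilon'/4$ for all $0\leq q<n$. If $\underline{x}$ and $\underline{x}'$ differ at a coordinate $(m,j,i',t)$ with $M_{m,i',j,t}+n(m,j)\leq n$, then $x(m,j,i',t)$ and $x'(m,j,i',t)$ both belong to the $(n(m,j),\epsilon')$-separated set $W_{n(m,j)}$, so there exists $q\in[0,n(m,j))$ with $d(f^q x(m,j,i',t),f^q x'(m,j,i',t))\geq\epsilon'$. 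Combining this with the shadowing bounds
\begin{align*}
d(f^{M_{m,i',j,t}+q}z,f^q x(m,j,i',t))\leq\epsilon'/4 \quad \text{and} \quad d(f^{M_{m,i',j,t}+q}z',f^q x'(m,j,i',t))\leq\epsilon'/4
\end{align*}
via the triangle inequality produces $d(f^{M_{m,i',j,t}+q}z,f^{M_{m,i',j,t}+q}z')\geq\epsilon'/2$, which contradicts the Bowen-ball bound of $\epsilon'/4$. Hence every such coordinate is forced.

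It remains to count. Because $n\geq M_{k+1,i}$, every block in layers $1,\dots,k$ and every block in the first $i-1$ rounds of layer $k+1$ satisfies $M_{m,i',j,t}+n(m,j)\leq M_{k+1,i}\leq n$. The number of possible assignments to those forced coordinates is exactly $\sharp L_k\cdot\bigl(\prod_{j=1}^{s_{k+1}}\sharp W_{n(k+1,j)}^{N_{k+1}C_{k+1,j}}\bigr)^{i-1}$, and since the coordinates of $\underline{x}$ are mutually independent, each such assignment extends to exactly $\sharp L_{k+p}$ divided by that product many full tuples. Dividing by $\sharp L_{k+p}$ proves the inequality (the boundary case $i=1$ makes the second factor equal to one). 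The main subtlety will be keeping the three distance scales straight --- $\epsilon'/8$ for the Bowen ball, $\epsilon'/4$ for the shadowing precision, and $\epsilon'$ for the separation built into $W_{n(m,j)}$ --- while the remainder is a routine product-counting exercise.
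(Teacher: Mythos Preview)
Your proposal is correct and follows essentially the same approach as the paper: both argue that any two points of $L_{k+p}$ lying in the Bowen ball $B_n(x,\epsilon'/8)$ must share all tuple coordinates whose orbital windows lie in $[0,n)$, via the triangle-inequality separation argument comparing the scales $\epsilon'/8$, $\epsilon'/4$, and $\epsilon'$, and then count the free coordinates. The only cosmetic differences are that the paper treats $p=1$ explicitly and says ``$p>1$ is similar,'' and that it invokes the $(n(m,j),\epsilon'/2)$-separation of shadowing points established just before the lemma rather than rederiving it inside the proof.
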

\begin{proof}
Case $p=1$. Suppose $\alpha_{k+1}(B_{n}(x,\frac{\epsilon'}{8}))>0$,
then $L_{k+1}\cap B_{n}(x,\frac{\epsilon'}{8})
\neq \emptyset$.
Let $z=z(\underline{x},\underline{x}_{k+1}),z^{\prime}=z(\underline{y},\underline{y}_{k+1})
\in L_{k+1}\cap B_{n}(x,\frac{\epsilon'}{8})$, where
\begin{align*}
\underline{x}&=(x(1,1,1,1),\cdots,x(k,s_k,T_k,N_kC_{k,s_k})),\\
\underline{y}&=(y(1,1,1,1),\cdots,y(k,s_k,T_k,N_kC_{k,s_k})),
\end{align*}
and
\begin{align*}
\underline{x}_{k+1}=(&x(k+1,1,1,1),\cdots,x(k+1,s_{k+1},i-1,N_{k+1}C_{k+1,s_{k+1}}),\\
                    &\cdots,x(k+1,s_{k+1},T_k,N_{k+1}C_{k+1,s_{k+1}}))\\
\underline{y}_{k+1}=(&y(k+1,1,1,1),\cdots,y(k+1,s_{k+1},i-1,N_{k+1}C_{k+1,s_{k+1}})\\
                    &\cdots,y(k+1,s_{k+1},T_k,N_{k+1}C_{k+1,s_{k+1}})).
\end{align*}
Since $d_{n}(z,z')<\frac{\epsilon'}{4}$, we have $\underline{x}=\underline{y}$ and
$x(k+1,1,1,1)=y(k+1,1,1,1),\cdots,
x(k+1,s_{k+1},i-1,N_{k+1}C_{k+1,s_{k+1}})=y(k+1,s_{k+1},i-1,N_{k+1}C_{k+1,s_{k+1}})$.
Thus we have
\begin{align*}
\alpha_{k+1}(B_{n}(x,\frac{\epsilon'}{8}))\leq
&\frac{(\sharp W_{n(k+1,1)}^{N_{k+1}C_{k+1,1}}\sharp W_{n(k+1,2)}^{N_{k+1}C_{k+1,2}}\cdots\sharp W_{n(k+1,s_{k+1})}^{N_{k+1}C_{k+1,s_{k+1}}})^{T_{k+1}-(i-1)}}{\sharp L_{k+1}}\\
=&\left(\sharp L_k(\sharp W_{n(k+1,1)}^{N_{k+1}C_{k+1,1}}\sharp W_{n(k+1,2)}^{N_{k+1}C_{k+1,2}}\cdots\sharp W_{n(k+1,s_{k+1})}^{N_{k+1}C_{k+1,s_{k+1}}})^{i-1}\right)^{-1}.
\end{align*}
Case $p>1$ is similar.
\end{proof}
Since $a_{l,j}=n(l,j)C_{l,j}$,
\begin{align*}
\sharp W_{n(l,j)}\geq\exp\left(n(l,j)(1-\gamma)(h^{Kat}_{m_{l,j}}(f,\epsilon')-4\gamma)\right),
\end{align*}
by lemma \ref{lem3.6}, we have
\begin{align*}
\sharp L_k=&\left(\sharp W_{n(1,1)}^{N_1C_{1,1}}\sharp W_{n(1,2)}^{N_1C_{1,2}}\cdots\sharp W_{n(1,s_1)}^{N_1C_{1,s_1}}\right)^{T_1}\cdots
\left(\sharp W_{n(k,1)}^{N_kC_{k,1}}\sharp W_{n(k,2)}^{N_kC_{k,2}}\cdots\sharp W_{n(k,s_k)}^{N_kC_{k,s_k}}\right)^{T_k}\\
\geq&\exp\left(\sum_{l=1}^{k}\sum_{j=1}^{s_l}T_lN_lC_{l,j}n(l,j)(1-\gamma)(h^{Kat}_{m_{l,j}}(f,\epsilon')-4\gamma)\right)\\
\geq&\exp\left(\sum_{l=1}^{k}T_lN_l(1-\gamma)(h^{Kat}_{\mu_{\rho(l)}}(f,\epsilon')-4\gamma)\right)\\
\geq&\exp\left(\sum_{l=1}^{k}T_lN_l(1-\gamma)(\mathbf{C}-5\gamma)\right)
\end{align*}
and
\begin{align*}
&\left(\sharp W_{n(k+1,1)}^{N_{k+1}C_{k+1,1}}\sharp W_{n(k+1,2)}^{N_{k+1}C_{k+1,2}}\cdots\sharp W_{n(k+1,s_{k+1})}^{N_{k+1}C_{k+1,s_{k+1}}}\right)^{i-1}\\
\geq&\exp\left(\sum_{j=1}^{s_{k+1}}(i-1)N_{k+1}C_{k+1,j}n(k+1,j)(1-\gamma)(h^{Kat}_{m_{k+1,j}}(f,\epsilon')-4\gamma)\right)\\
\geq&\exp\left((i-1)N_{k+1}(1-\gamma)(\mathbf{C}-5\gamma)\right).
\end{align*}
Hence we obtain
\begin{align*}
&\sharp L_k\left(\sharp W_{n(k+1,1)}^{N_{k+1}C_{k+1,1}}\sharp W_{n(k+1,2)}^{N_{k+1}C_{k+1,2}}\cdots\sharp W_{n(k+1,s_{k+1})}^{N_{k+1}C_{k+1,s_{k+1}}}\right)^{i-1}\\
\geq&\exp\bigg\{\sum_{l=1}^{k}T_lN_l(1-\gamma)(\mathbf{C}-5\gamma)
+(i-1)N_{k+1}(1-\gamma)(\mathbf{C}-5\gamma)\bigg\}\\
=&\exp\left\{\left(\sum_{l=1}^{k}T_lN_l+(i-1)N_{k+1}\right)(\mathbf{C}-5\gamma)(1-\gamma)\right\}\\
=&\exp\left\{n\left((\mathbf{C}-5\gamma)(1-\gamma)
-\frac{n-\sum_{l=1}^{k}T_lN_l-(i-1)N_{k+1}}{n}(\mathbf{C}-5\gamma)(1-\gamma)\right)\right\}.
\end{align*}
From (\ref{equ2}) and $i\geq2$, we have
\begin{align*}
n-\sum_{l=1}^{k}T_lN_l-(i-1)N_{k+1}
&=n-\sum_{l=1}^{k}T_lY_l-(i-1)Y_{k+1}+\sum_{l=1}^{k}T_lX_l+(i-1)X_{k+1}\\
&\leq Y_{k+1}+\sum_{r=1}^{k+1}s(r,1,r+1,1)+\sum_{l=1}^{k}T_lX_l+(i-1)X_{k+1}.
\end{align*}
By inqualities (\ref{equ1}), (\ref{equ2}), (\ref{equ3}) and (\ref{equ4}) and $i\geq2$, we obtain
\begin{align*}
\lim_{n\to\infty}\frac{n-\sum_{l=1}^{k}T_lN_l-(i-1)N_{k+1}}{n}(\mathbf{C}-5\gamma)(1-\gamma)=0.
\end{align*}
Thus for sufficiently large $n$, we can deduce that
\begin{align*}
\limsup_{m\to\infty}\alpha_{m}(B_{n}(x,\frac{\epsilon'}{8}))\leq &(\sharp L_k(\sharp W_{n(k+1,1)}^{N_{k+1}C_{k+1,1}}\sharp W_{n(k+1,2)}^{N_{k+1}C_{k+1,2}}\cdots\sharp W_{n(k+1,s_{k+1})}^{N_{k+1}C_{k+1,s_{k+1}}})^{i-1})^{-1}\\
\leq&\exp\{-n((\mathbf{C}-5\gamma)(1-\gamma)-\gamma)\}.
\end{align*}
Applying the entropy distribution principle, we have
\begin{align*}
h_{top}(\widehat{M}(\varphi|N(\widetilde{\Lambda}_\mu),f),\frac{\epsilon'}{8})
\geq h_{top}(F,\frac{\epsilon'}{8})\geq (\mathbf{C}-5\gamma)(1-\gamma)-\gamma.
\end{align*}
Let $\epsilon'\to0$ and $\gamma\to0$; we have
\begin{align*}
h_{top}\left(\widehat{M}(\varphi|N(\widetilde{\Lambda}_\mu),f)\right)\geq\sup\left\{h_\nu(f):\nu\in \mathscr{M}_{inv}(\widetilde{\Lambda}_\mu,f)\right\}.
\end{align*} 

To obtain the upper bound, we need the following lemma.

\begin{lem}{\rm\cite{Bow}}
For $t\geq0$, consider the set
\begin{align*}
B(t)=\{x\in M:\exists \nu\in V(x) \text{ satisfying } h_\nu(f)\leq t\}.
\end{align*}
Then $h_{top}(B(t))\leq t$.
\end{lem}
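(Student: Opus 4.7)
This is Bowen's classical theorem from \cite{Bow}, and the strategy combines Katok's entropy formula (Lemma \ref{lem3.1}) with a sliding-window covering argument. Fix $s>t$ and $\gamma>0$ with $t+2\gamma<s$; it suffices to prove $h_{top}(B(t),\epsilon)\leq s$ for every sufficiently small $\epsilon>0$, and then let $s\downarrow t$ and $\epsilon\downarrow 0$.

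The starting point is that for every $\mu\in\mathscr M_{\rm inv}(M,f)$ with $h_\mu(f)\leq t$, Lemma \ref{lem3.1} together with the non-ergodic extension (\ref{equ3.1}) gives $h_\mu^{Kat}(f,\epsilon/2)\leq t+\gamma$ for $\epsilon>0$ small. Hence there are arbitrarily large $n$ for which an open set $U_n(\mu)=\bigcup_{y\in S_n(\mu)}B_n(y,\epsilon/2)$ of $\mu$-measure greater than $1/2$ is a union of at most $\#S_n(\mu)\leq e^{n(t+2\gamma)}$ Bowen $(n,\epsilon/2)$-balls.

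Next, for each $x\in B(t)$ pick $\mu_x\in V(x)$ with $h_{\mu_x}(f)\leq t$ and a subsequence $n_k(x)\to\infty$ along which $\mathscr{E}_{n_k(x)}(x)\to\mu_x$. The Portmanteau theorem applied to the open set $U_n(\mu_x)$ yields
\begin{align*}
\liminf_{k\to\infty}\mathscr{E}_{n_k(x)}(x)\bigl(U_n(\mu_x)\bigr)\geq\mu_x(U_n(\mu_x))>\tfrac{1}{2},
\end{align*}
so for $k$ large with $n_k(x)\geq 3n$ there exist $i\in[0,n_k(x)-n]$ and $y\in S_n(\mu_x)$ with $f^ix\in B_n(y,\epsilon/2)$. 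This places $x$ inside a Bowen ball of length $n+i$ and radius $\epsilon$ whose central orbit $\epsilon/2$-shadows $y$ between times $i$ and $i+n-1$.

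The final step is to assemble the local data into a global cover of $B(t)$ of controlled weight. One stratifies $B(t)$ by a discrete parameter encoding both a scale $N$ beyond which the previous conclusion applies uniformly and a member of a countable weak-$*$-dense family in $\{\mu\in\mathscr M_{\rm inv}(M,f):h_\mu(f)\leq t\}$; for each such stratum the resulting cover by Bowen balls of length $m=n+i\geq N$ has Carath\'eodory weight bounded by
\begin{align*}
\sum_{n\geq N}e^{n(t+2\gamma)}\sum_{i\geq 0}e^{-(n+i)s}=\frac{1}{1-e^{-s}}\sum_{n\geq N}e^{n(t+2\gamma-s)},
\end{align*}
which tends to $0$ as $N\to\infty$ because $t+2\gamma<s$. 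Countable subadditivity over the strata then yields $m(B(t),s,\epsilon)=0$, hence $h_{top}(B(t),\epsilon)\leq s$. The hardest part is the bookkeeping in this final step: the measure $\mu_x$ and the subsequence $n_k(x)$ depend on $x$, and uniformizing them via stratification and countable approximation in a way that preserves the Katok cover size bound $e^{n(t+2\gamma)}$ per stratum is the technical core of Bowen's original argument, with the slack $s>t+2\gamma$ producing the geometric factor $e^{-is}$ that absorbs all time offsets.
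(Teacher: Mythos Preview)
The paper does not prove this lemma; it is quoted directly from Bowen~\cite{Bow} and invoked as a black box for the upper bound in Theorem~\ref{thm2.1}. So there is no in-paper argument to compare against, and the question is whether your sketch stands on its own.

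It has a genuine gap at the step where you pass from $h_{\mu_x}^{Kat}(f,\epsilon/2)\leq t+\gamma$ to a set $U_n(\mu_x)$ of $\mu_x$-measure greater than $\tfrac12$ covered by at most $e^{n(t+2\gamma)}$ Bowen balls. For a non-ergodic limit measure $\mu_x\in V(x)$ this inference is false: in (\ref{equ3.1}) the quantity $h_\nu^{Kat}(f,\epsilon)$ is \emph{defined} as the integral $\int h_m^{Kat}(f,\epsilon)\,d\tau(m)$ over the ergodic decomposition and says nothing about $N^{\mu_x}(n,\epsilon,\delta)$. Concretely, if $\mu_x=\tfrac12(m_1+m_2)$ with $h_{m_1}(f)=0$ and $h_{m_2}(f)=2t$, then $h_{\mu_x}(f)=t$, yet any set of $\mu_x$-measure exceeding $\tfrac12$ carries positive $m_2$-mass and therefore needs on the order of $e^{2nt}$ Bowen balls. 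Since $V(x)$ need not contain any ergodic measure of entropy at most $t$, you cannot sidestep this by reducing to the ergodic case. Bowen's own route avoids Katok's formula: one fixes a finite partition $\xi$ of diameter below $\epsilon$, uses that $\tfrac1n H_{\mu_x}\bigl(\bigvee_{i=0}^{n-1}f^{-i}\xi\bigr)\to h_{\mu_x}(f,\xi)\leq t$ holds for \emph{every} invariant measure by subadditivity, and applies Markov's inequality to the information function $-\log\mu_x\bigl(\bigvee_{i=0}^{n-1}f^{-i}\xi\,(\cdot)\bigr)$ to produce a set of $\mu_x$-measure at least $\gamma/(t+2\gamma)$ that is a union of at most $e^{n(t+2\gamma)}$ cylinders. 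With that substitution the remainder of your outline (Portmanteau, positive density of visits, concatenated covering) can be carried out, and the countability needed for the Carath\'eodory sum comes from the fixed countable family of $\xi$-cylinders rather than from a weak-$*$-dense set of measures---which also resolves the vagueness in your final stratification step.
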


Let
\begin{align*}
t=\sup\left\{h_\nu(f):\nu\in \mathscr{M}_{inv}(\widetilde{\Lambda}_\mu,f)\right\}.
\end{align*}
Then $\widehat{M}(\varphi|N(\widetilde{\Lambda}_\mu),f)\subset B(t)$.
Thus
\begin{align*}
h_{top}\left(\widehat{M}(\varphi|N(\widetilde{\Lambda}_\mu),f)\right)\leq\sup\left\{h_\nu(f):\nu\in \mathscr{M}_{inv}(\widetilde{\Lambda}_\mu,f)\right\},
\end{align*}
and the proof of theorem \ref{thm2.1} is completed.

\section{Some Applications}
{\bf Example 1 Diffeomorphisms on surfaces}
Let $f:M\to M$ be a $C^{1+\alpha}$ diffeomorphism with $\dim M=2$ and
$h_{top}(f)>0$, then there exists a hyperbolic measure $m\in\mathscr M_{\rm erg}(M,f)$
with Lyapunov exponents $\lambda_1>0>\lambda_2$(see \cite{Pol}). If
$\beta_1=|\lambda_2|$ and $\beta_2=\lambda_1$, then for any $\epsilon>0$
such that $\beta_1,\beta_2>\epsilon$, we have $m(\Lambda(\beta_1,\beta_2,\epsilon))=1$.
Let
\begin{align*}
\widetilde{\Lambda}=\bigcup_{k=1}^{\infty}\text{supp}(m|\Lambda(\beta_1,\beta_2,\epsilon)).
\end{align*}
If $\varphi\in C^0(M)$, one of the following conclusions is right.
\begin{enumerate}
\item The function $\mu\mapsto\int\varphi d\mu$ is constant for $\mu\in \mathscr M_{\rm inv}(\widetilde{\Lambda},f)$.
\item $\widehat{M}(\varphi|N(\widetilde{\Lambda}),f)\neq\emptyset$ and
      $h_{top}\left(\widehat{M}(\varphi|N(\widetilde{\Lambda}),f)\right)=\sup\left\{h_\mu(f):\mu\in\mathscr M_{\rm inv}(\widetilde{\Lambda},f)\right\}$.
\end{enumerate}

\noindent
{\bf Example 2 Nonuniformly hyperbolic systems} In \cite{Kat1}, Katok described a construction
of a diffeomorphism on the 2-torus $\mathbb{T}^2$ with nonzero Lyapunov exponents, which is not an Anosov map.
Let $f_0$ be a linear automorphism given by the matrix
\begin{align*}
  A=\begin{pmatrix}
   2&1\\
   1&1
  \end{pmatrix}
\end{align*}
with eigenvalues $\lambda^{-1}<1<\lambda$.
$f_0$ has a maximal measure $\mu_1$. Let $D_r$ denote the disk of radius $r$ centered
at (0,0), where $r>0$ is small, and put coordinates $(s_1,s_2)$ on $D_r$ corresponding to
the eigendirections of $A$, i.e, $A(s_1,s_2)=(\lambda s_1,\lambda^{-1}s_2)$.
The map $A$ is the time-1 map of the local flow in $D_r$
generated by the following system of differential equations:
\begin{align*}
\frac{ds_1}{dt}=s_1\log\lambda, \frac{ds_2}{dt}=-s_2\log\lambda.
\end{align*}
The Katok map is obtained from $A$ by slowing down these equations near the origin.
It depends upon a real-valued function $\psi$, which is defined on the unit interval $[0,1]$
and has the following properties:
\begin{enumerate}
\item[(1)] $\psi$ is $C^\infty$ except at 0;
\item[(2)] $\psi(0)=0$ and $\psi(u)=1$ for $u\geq r_0$ where $0<r_0<1$;
\item[(3)] $\psi^{\prime}(u)>0$ for every $0<u<r_0$;
\item[(4)] $\int_0^1\frac{du}{\psi(u)}<\infty$.
\end{enumerate}
Fix sufficiently small numbers $r_0<r_1$ and consider the time-1 map $g$ generated
by the following system of differential equations in $D_{r_1}$:
\begin{align*}
\frac{ds_1}{dt}=s_1\psi(s_1^2+s_2^2)\log\lambda, \frac{ds_2}{dt}=-s_2\psi(s_1^2+s_2^2)\log\lambda.
\end{align*}
The map $f$, given as $f(x)=g(x)$ if $x\in D_{r_1}$ and $f(x)=A(x)$ otherwise, defines a homeomorphism of
torus, which is a $C^\infty$ diffeomorphism everywhere except for the origin. To provide the differentiability
of map $f$, the function $\psi$ must satisfy some extra conditions. Namely, the integral $\int_0^1du/\psi$ must
converge ``very slowly" near the origin. We refer the smoothness to \cite{Kat1}. Here $f$ is contained in the
$C^0$ closure of Anosov diffeomorphisms and even more there is a homeomorphism $\pi:\mathbb{T}^2\to \mathbb{T}^2$
such that $\pi\circ f_0=f\circ \pi$. Let $\nu_0=\pi_\ast\mu_1$.

In \cite{LiaLiaSUnTia}, the authors proved
that there exist $0< \epsilon\ll \beta$ and a neighborhood $U$ of $\nu_0$ in
$\mathscr M_{\rm inv}(\mathbb{T}^2,f)$ such that for any ergodic $\nu\in U$ it holds that
$\nu\in \mathscr M_{\rm inv}(\widetilde{\Lambda}(\beta,\beta,\epsilon),f)$, where
$\widetilde{\Lambda}(\beta,\beta,\epsilon)=\bigcup_{k\geq1}{\rm supp}(\nu_0|\Lambda_k(\beta,\beta,\epsilon))$.

\begin{cro}
If $\varphi\in C^0(\mathbb{T}^2)$, one of the following conclusions is right.
\begin{enumerate}
\item The function $\mu\mapsto\int\varphi d\mu$ is constant for $\mu\in \mathscr M_{\rm inv}(\widetilde{\Lambda}(\beta,\beta,\epsilon),f)$.
\item $\widehat{\mathbb{T}^2}(\varphi|N(\widetilde{\Lambda}(\beta,\beta,\epsilon)),f)\neq\emptyset$ and
      $$h_{top}\left(\widehat{\mathbb{T}^2}(\varphi|N(\widetilde{\Lambda}(\beta,\beta,\epsilon)),f)\right)=
         \sup\left\{h_\mu(f):\mu\in\mathscr M_{\rm inv}(\widetilde{\Lambda}(\beta,\beta,\epsilon),f)\right\}.$$
\end{enumerate}
\end{cro}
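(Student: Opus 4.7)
The plan is to derive this corollary as a direct application of Theorem \ref{thm2.1} to the Katok diffeomorphism $f:\mathbb{T}^2\to\mathbb{T}^2$ with the distinguished invariant measure $\nu_0=\pi_\ast\mu_1$. My task therefore reduces to checking the hypotheses of Theorem \ref{thm2.1} in this setting and then observing that $\widetilde{\Lambda}_{\nu_0}$ in the notation of the theorem coincides with $\widetilde{\Lambda}(\beta,\beta,\epsilon)$ in the notation of the corollary.

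First I would verify the regularity hypothesis: by Katok's construction $f$ is a $C^{1+\alpha}$ diffeomorphism of $\mathbb{T}^2$ provided the slowing-down function $\psi$ is chosen so that $\int_0^1 du/\psi(u)$ converges sufficiently slowly near $0$, which is the condition recalled just before the corollary. Next I would verify that $\nu_0$ is ergodic: since $\pi$ is a homeomorphism with $\pi\circ f_0=f\circ\pi$ and $\mu_1$ is ergodic for the linear Anosov $f_0$ (being its measure of maximal entropy), the pushforward $\nu_0=\pi_\ast\mu_1$ is automatically $f$-invariant, and any $f$-invariant Borel set pulls back under $\pi$ to an $f_0$-invariant set, so ergodicity transfers to $\nu_0$.

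For the Pesin hypothesis, I would invoke the result quoted from \cite{LiaLiaSUnTia}: there exist $0<\epsilon\ll\beta$ and a neighbourhood $U$ of $\nu_0$ in $\mathscr M_{\rm inv}(\mathbb{T}^2,f)$ such that every ergodic measure in $U$ -- in particular $\nu_0$ itself -- lies in $\mathscr M_{\rm inv}(\widetilde{\Lambda}(\beta,\beta,\epsilon),f)$. This immediately gives $\Lambda(\beta,\beta,\epsilon)\neq\emptyset$ and $\nu_0(\Lambda_k(\beta,\beta,\epsilon))>0$ for all sufficiently large $k$, so the hypotheses ``non-empty Pesin set with $\beta_1,\beta_2\gg\epsilon>0$'' and ``ergodic measure $\mu$'' of Theorem \ref{thm2.1} are satisfied with $\mu:=\nu_0$ and $\beta_1=\beta_2=\beta$.

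With the hypotheses verified, the identification
\begin{align*}
\widetilde{\Lambda}_{\nu_0}=\bigcup_{l\geq1}{\rm supp}(\nu_0|\Lambda_l(\beta,\beta,\epsilon))=\widetilde{\Lambda}(\beta,\beta,\epsilon)
\end{align*}
is tautological from the two definitions, and the dichotomy produced by Theorem \ref{thm2.1} for the pair $(\nu_0,\varphi)$ is exactly the dichotomy claimed in the corollary. I anticipate no serious obstacle in this argument: the real work -- placing $\nu_0$ inside the Pesin block filtration with controlled exponent bounds -- has already been carried out in \cite{LiaLiaSUnTia}, and the remainder is purely a matter of unpacking definitions and quoting Theorem \ref{thm2.1}.
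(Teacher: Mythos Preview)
Your proposal is correct and follows exactly the approach the paper intends: the corollary is stated without proof precisely because it is an immediate specialization of Theorem \ref{thm2.1} with $\mu=\nu_0$ and $\beta_1=\beta_2=\beta$, using the identification $\widetilde{\Lambda}_{\nu_0}=\widetilde{\Lambda}(\beta,\beta,\epsilon)$ and the result from \cite{LiaLiaSUnTia} to guarantee the Pesin-set hypotheses. There is nothing to add.
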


In \cite{LiaLiaSUnTia}, the authors also studied the structure of Pesin set $\widetilde{\Lambda}$
for the robustly transitive partially hyperbolic diffeomorphisms described
by Ma\~{n}\'{e} and the robustly transitive non-partially hyperbolic
diffeomorphisms described by Bonatti-Viana.
They showed that for the diffeomorphisms derived from
Anosov systems $\mathscr M_{\rm inv}(\widetilde{\Lambda},f)$
enjoys many members. So our result is applicable to these maps.


\noindent {\bf Acknowledgements.}   The research was supported by
the National Basic Research Program of China
(Grant No. 2013CB834100) and the National Natural Science
Foundation of China (Grant No. 11271191).

\end{document}